\documentclass[11pt]{amsart}

\usepackage[T1]{fontenc}
\usepackage{lmodern}
\usepackage{microtype}
\usepackage[margin=1in]{geometry}

\usepackage{amsmath,amssymb,mathtools}
\newcommand{\ostar}{\circledast}

\usepackage{graphicx}
\usepackage{xcolor}
\usepackage{tikz}
\usepackage{subcaption}
\usepackage{float}
\usepackage{flafter}
\usepackage{array,tabularx}

\usepackage[hidelinks]{hyperref}
\usepackage[nameinlink,capitalise,noabbrev]{cleveref}
\numberwithin{equation}{section}
\usetikzlibrary{calc,positioning,fit,shapes.geometric,backgrounds}

\theoremstyle{plain}
\newtheorem{theorem}{Theorem}[section]
\newtheorem{lemma}[theorem]{Lemma}
\newtheorem{proposition}[theorem]{Proposition}

\newtheorem{conjecture}[theorem]{Conjecture}

\theoremstyle{definition}
\newtheorem{definition}[theorem]{Definition}

\theoremstyle{remark}

\definecolor{AuditNavy}{HTML}{17324D}
\definecolor{AuditBlue}{HTML}{2A6690}
\definecolor{AuditPaleBlue}{HTML}{E8EEF3}
\definecolor{AuditPaleGold}{HTML}{A2C96D}
\definecolor{AuditGold}{HTML}{B8892B}
\definecolor{AuditRed}{HTML}{9B2C2C}
\definecolor{AuditPaleRed}{HTML}{A4B7B1}
\definecolor{AuditBrown}{HTML}{8B5B2B}
\definecolor{AuditGray}{HTML}{64748B}

\tikzset{
  ordinary vertex/.style={
    circle,draw=AuditNavy,fill=AuditPaleBlue,
    minimum size=3.6pt,inner sep=0pt,line width=.45pt
  },
  attachment vertex/.style={
    circle,double,draw=AuditBrown,fill=AuditPaleGold,
    minimum size=5.6pt,inner sep=0pt,line width=.55pt,
    double distance=.55pt
  },
  center vertex/.style={
    circle,double,draw=AuditBlue,fill=white,
    minimum size=5.8pt,inner sep=0pt,line width=.55pt,
    double distance=.55pt
  },
  root vertex/.style={
    circle,double,draw=AuditRed,fill=AuditPaleRed,
    minimum size=6.4pt,inner sep=0pt,line width=.65pt,
    double distance=.65pt
  },
  child port/.style={
    circle,double,draw=AuditBlue,fill=AuditPaleBlue,
    minimum size=6.4pt,inner sep=0pt,line width=.65pt,
    double distance=.65pt
  },
  internal edge/.style={draw=AuditGray,line width=.48pt},
  cycle edge/.style={draw=AuditNavy,line width=.60pt},
  bridge edge/.style={draw=AuditBrown,line width=1.05pt},
  root cycle edge/.style={draw=AuditBlue,line width=1.45pt},
}

\tikzset{
  pics/gadgetB/.style={
    code={
      \coordinate (-root) at (0:1.05);
      \draw[cycle edge]
        (0:1.05)--(-40:1.05)--(-80:1.05)--(-120:1.05)--
        (-160:1.05)--(-200:1.05)--(-240:1.05)--(-280:1.05)--
        (-320:1.05)--cycle;
      \draw[internal edge] (-40:1.05)--(-200:1.05);
      \draw[internal edge] (-80:1.05)--(-280:1.05);
      \draw[internal edge] (-120:1.05)--(-240:1.05);
      \draw[internal edge] (-160:1.05)--(-320:1.05);
      \foreach \ang in {-40,-80,-120,-160,-200,-240,-280,-320}{
        \node[ordinary vertex] at (\ang:1.05) {};
      }
      \node[attachment vertex] at (0:1.05) {};
    }
  },
}

\tikzset{
  pics/blockBJoin/.style={
    code={
      \pic (Bone) at (-2.70,0) {gadgetB};
      \pic[rotate=180,transform shape] (Btwo) at (2.70,0) {gadgetB};
      \coordinate (-attachmentone) at (Bone-root);
      \coordinate (-attachmenttwo) at (Btwo-root);
      \coordinate (-root) at (0,0);
      \draw[bridge edge] (-attachmentone)--(-root);
      \draw[bridge edge] (-root)--(-attachmenttwo);
      \node[root vertex] at (-root) {};
    }
  },
}

\tikzset{
  pics/blockBltimes/.style={
    code={
      \pic[scale=.55,transform shape] (Bcore) at (-1.25,.35) {gadgetB};
      \coordinate (-attachment) at (Bcore-root);
      \coordinate (-center) at (.15,.35);
      \coordinate (-leaf) at (1.15,.35);
      \coordinate (-root) at (.15,-.85);
      \draw[bridge edge] (-attachment)--(-center)--(-leaf);
      \draw[bridge edge] (-center)--(-root);
      \node[center vertex] at (-center) {};
      \node[attachment vertex] at (-leaf) {};
      \node[root vertex] at (-root) {};
    }
  },
}

\title[Domination--packing ratios in connected subcubic graphs]
{New lower bounds on domination--packing ratios\\ in connected subcubic and cubic graphs}

\author{JiSun Huh}
\address{Department of Mathematics, University of Seoul, Seoul 02504, South Korea}
\email{hyunyjia@yonsei.ac.kr}

\author{Juho Kim}
\address{Department of Economics, University of Seoul, Seoul 02504, South Korea}
\email{kjuho0403@uos.ac.kr}
\date{}

\subjclass[2020]{Primary 05C69; Secondary 05C35}
\keywords{domination number, packing number, domination--packing ratio, cubic graph, subcubic graph, branching construction}

\begin{document}

\begin{abstract}
For a graph \(G\), let \(\gamma(G)\) and \(\rho(G)\) denote its
domination number and packing number, respectively. Let
\(c_{\mathrm{sub}}\) and \(c_{\mathrm{cub}}\) denote the respective
limsups of \(\gamma(G)/\rho(G)\) over connected subcubic and connected
cubic graphs as \(\rho(G)\to\infty\). We prove
\[
  c_{\mathrm{sub}}\geq\frac{13}{6},
  \qquad
  c_{\mathrm{cub}}\geq\frac{17}{8},
\]
by constructing two explicit binary branching families. The connected
noncubic subcubic graphs \(\widehat B_t^\star\) satisfy
\[
 |V(\widehat B_t^\star)|=76\cdot2^t-12,\qquad
 \gamma(\widehat B_t^\star)=26\cdot2^t-4,\qquad
 \rho(\widehat B_t^\star)=12\cdot2^t-2,
\]
whereas the connected cubic graphs \(\widehat B_t^\bullet\) satisfy
\[
 |V(\widehat B_t^\bullet)|=108\cdot2^t-14,\qquad
 \gamma(\widehat B_t^\bullet)=34\cdot2^t-4,\qquad
 \rho(\widehat B_t^\bullet)=16\cdot2^t-2.
\]
The constructions use the same binary connector composition and
closing lemma, with different connectors and initial assemblies. As a consequence,
both families give unbounded additive violations of
\(\gamma(G)\leq2\rho(G)+1\), disproving the proposed inequality even
for connected cubic graphs. 
\end{abstract}

\maketitle

\section{Introduction}

Throughout, all graphs are finite, simple, and undirected.
For a graph \(G\), write \(\Delta(G)\) for its maximum degree, and
write \(d_G(u,v)\) for the distance between vertices \(u\) and \(v\).
A graph is \emph{subcubic} if \(\Delta(G)\leq3\), and \emph{cubic}
if every vertex has degree \(3\).
For a vertex \(v\in V(G)\), let \(N_G(v)\) and
\(N_G[v]=N_G(v)\cup\{v\}\) denote its open and closed neighborhoods,
respectively.  A set \(D\subseteq V(G)\) is \emph{dominating} if
\(D\cap N_G[v]\neq\varnothing\) for every \(v\in V(G)\); the minimum
cardinality of such a set is the \emph{domination number} \(\gamma(G)\).
A set \(P\subseteq V(G)\) is a \emph{packing} if
\(d_G(u,v)\geq3\) for all distinct \(u,v\in P\), equivalently if the
closed neighborhoods of its vertices are pairwise disjoint; the
maximum cardinality of such a set is the \emph{packing number}
\(\rho(G)\).

The elementary inequality
\[
  \rho(G)\leq\gamma(G)
\]
follows because every dominating set must meet each of the pairwise
disjoint closed neighborhoods centered at the vertices of a packing.
In the opposite direction, no bound on \(\gamma(G)\) in terms of
\(\rho(G)\) alone is possible for arbitrary graphs.  For the Cartesian
product \(K_n\square K_n\) of two complete graphs, any two vertices are at distance at most
\(2\), while fewer than \(n\) vertices miss a row and a column and a full
row dominates.  Hence
\[
  \rho(K_n\square K_n)=1
  \qquad\text{and}\qquad
  \gamma(K_n\square K_n)=n.
\]

A central line of research is therefore to identify graph classes for
which the ratio \(\gamma(G)/\rho(G)\) is bounded and to determine the
best class-dependent constant \cite{BurgerHenningVanVuuren2009};
results are known for trees, chordal
subclasses, and various sparse, bounded-width, geometric, and
low-degree classes; see
\cite{BonamyCsikosGujgiczerYuditsky2025,GomezGutierrez2025}
for recent discussions and further references.
Recent work improved the bound for planar graphs and established
bounded ratios for chordal-bipartite and homogeneously orderable
graphs \cite{DuczGujgiczer2026}.  This was followed by further
improvements for planar and unit-disk graphs
\cite{CamesVanBatenburg2026}.

For graphs with no isolated vertices, Henning, L\"owenstein, and
Rautenbach proved
\[
  \gamma(G)\leq\Delta(G)\rho(G),
\]
and they established the sharper inequality
\(\gamma(G)\leq2\rho(G)\) for claw-free subcubic graphs
\cite{HenningLowensteinRautenbach2011}.
They also proposed the following conjecture.

\begin{conjecture}[Henning--L\"owenstein--Rautenbach
  {\cite[Conjecture~3]{HenningLowensteinRautenbach2011}}]
\label{conj:HLR}
Let \(G\) be a connected subcubic graph.
\begin{enumerate}
\item The inequality
\[
  \gamma(G)\leq 2\rho(G)+1
\]
holds.
\item Equality holds if and only if
\[
  G\in\{H_1,H_2,H_3\},
\]
where \(H_1,H_2,H_3\) are the three graphs specified by Henning,
L\"owenstein, and Rautenbach.
\end{enumerate}
\end{conjecture}

The graphs \(H_1\) and \(H_2\) are the two exceptional cubic graphs
of order \(8\), and \(H_3\) is the Petersen graph.
Each of them satisfies
\[
  \gamma(H_i)=3,\qquad \rho(H_i)=1,
\]
and hence \(\gamma(H_i)=2\rho(H_i)+1\).  They are shown in
Figure~\ref{fig:conjecture-equality-graphs}.  Henning, L\"owenstein, and
Rautenbach also proved that, for a connected subcubic graph \(G\),
\[
  \gamma(G)=3\rho(G)
  \quad\Longleftrightarrow\quad
  G\in\{H_1,H_2,H_3\}
\]
\cite[Theorem~4]{HenningLowensteinRautenbach2011}.
The conjectured bound is also known for several restricted classes
of subcubic graphs. Henning, Maniya, and Pradhan proved that every
connected subcubic graph with no induced \(P_5\) satisfies
\(\gamma(G)\leq2\rho(G)\)
\cite[Theorem~3]{HenningManiyaPradhan2026}.
Guti\'errez and Paul proved
that every bridgeless claw-free cubic graph \(G\) satisfies
\[
  \gamma(G)
  \leq
  \frac{7}{4}\rho(G)+\frac{5}{6}
\]
\cite[Theorem~5]{GutierrezPaul2026}.

Conjecture~\ref{conj:HLR} contains two logically distinct assertions:
the inequality in part~(1) and the equality classification in part~(2).
Bonamy, Csik\'os, Gujgiczer, and Yuditsky (BCGY) constructed, for every
integer \(i\geq1\), a graph \(G_i\) of order \(6i+2\) such that
\[
  \rho(G_i)=i,
  \qquad
  \gamma(G_i)=2i+1.
\]
\cite[Theorem~1.12]
  {BonamyCsikosGujgiczerYuditsky2025}
Thus this construction attains equality in part~(1) for every \(i\),
while disproving the equality classification in part~(2).
Figure~\ref{fig:conjecture-equality-graphs} illustrates \(G_3\).

\begin{figure}
  \centering
  \begin{minipage}[t]{.98\textwidth}
    \centering
  \begin{minipage}[b]{.30\textwidth}
    \centering
    \begin{tikzpicture}[scale=1]
      \foreach \v/\ang in
        {0/45,1/0,4/-45,6/-90,5/-135,7/180,3/135,2/90}{
        \coordinate (h1-\v) at (\ang:1.05);
      }
      \draw[cycle edge]
        (h1-0)--(h1-1)--(h1-4)--(h1-6)--(h1-5)--(h1-7)--
        (h1-3)--(h1-2)--cycle;
      \draw[internal edge]
        (h1-0)--(h1-3) (h1-1)--(h1-5)
        (h1-2)--(h1-6) (h1-4)--(h1-7);
      \foreach \v in {0,...,7}{
        \node[ordinary vertex] at (h1-\v) {};
      }
    \end{tikzpicture}
    \par\smallskip\centering\(H_1\)
  \end{minipage}\hfill
  \begin{minipage}[b]{.30\textwidth}
    \centering
    \begin{tikzpicture}[scale=1]
      \foreach \v/\ang in
        {0/90,1/45,4/0,7/-45,3/-90,6/-135,5/180,2/135}{
        \coordinate (h2-\v) at (\ang:1.05);
      }
      \draw[cycle edge]
        (h2-0)--(h2-1)--(h2-4)--(h2-7)--(h2-3)--(h2-6)--
        (h2-5)--(h2-2)--cycle;
      \draw[internal edge]
        (h2-0)--(h2-3) (h2-1)--(h2-6)
        (h2-2)--(h2-7) (h2-4)--(h2-5);
      \foreach \v in {0,...,7}{
        \node[ordinary vertex] at (h2-\v) {};
      }
    \end{tikzpicture}
    \par\smallskip\centering\(H_2\)
  \end{minipage}\hfill
  \begin{minipage}[b]{.30\textwidth}
    \centering
    \begin{tikzpicture}[scale=1]
      \foreach \v/\ang in {0/90,1/18,4/-54,6/-126,2/162}{
        \coordinate (h3-\v) at (\ang:1.08);
      }
      \foreach \v/\ang in {3/90,5/18,8/-54,7/-126,9/162}{
        \coordinate (h3-\v) at (\ang:.47);
      }
      \draw[cycle edge]
        (h3-0)--(h3-1)--(h3-4)--(h3-6)--(h3-2)--cycle;
      \draw[cycle edge]
        (h3-3)--(h3-7)--(h3-5)--(h3-9)--(h3-8)--cycle;
      \draw[internal edge]
        (h3-0)--(h3-3) (h3-1)--(h3-5) (h3-4)--(h3-8)
        (h3-6)--(h3-7) (h3-2)--(h3-9);
      \foreach \v in {0,...,9}{
        \node[ordinary vertex] at (h3-\v) {};
      }
    \end{tikzpicture}
    \par\smallskip\centering\(H_3\)
  \end{minipage}

  \end{minipage}

  \par\vspace{1em}
  \begin{minipage}[t]{.64\textwidth}
    \centering
    \vspace{0pt}
  \begin{tikzpicture}[scale=1,transform shape]
    \foreach \j/\xx in {1/0,2/2.65,3/5.30}{
      \coordinate (b\j-3) at (\xx+.06,.27);
      \coordinate (b\j-4) at (\xx+.31,-.52);
      \coordinate (b\j-5) at (\xx+1.14,-.52);
      \coordinate (b\j-1) at (\xx+1.39,.27);
      \coordinate (b\j-2) at (\xx+.73,.77);
      \coordinate (b\j-6) at (\xx+.73,.05);
      \draw[cycle edge]
        (b\j-3)--(b\j-4)--(b\j-5)--(b\j-1)--
        (b\j-2)--cycle;
      \draw[internal edge]
        (b\j-5)--(b\j-6)--(b\j-2);
      \node[font=\scriptsize] at (\xx+.72,1.18) {\(A\)};
    }
    \draw[root cycle edge]
      (b1-1)--(b2-3) (b1-6)--(b2-4)
      (b2-1)--(b3-3) (b2-6)--(b3-4);
    \coordinate (u1) at (7.56,.42);
    \coordinate (u2) at (7.56,-.42);
    \draw[root cycle edge] (u1)--(u2);
    \draw[root cycle edge] (u1)--(b3-1) (u2)--(b3-6);
    \draw[root cycle edge,rounded corners=8pt]
      (b1-3)--(0,1.58)--(7.56,1.58)--(u1);
    \draw[root cycle edge,rounded corners=8pt]
      (b1-4)--(0,-1.58)--(7.56,-1.58)--(u2);
    \foreach \j in {1,2,3}{
      \foreach \h in {1,...,6}{
        \node[ordinary vertex] at (b\j-\h) {};
      }
    }
    \node[ordinary vertex] at (u1) {};
    \node[ordinary vertex] at (u2) {};
    \node at (3.78,-1.94) {\(G_3\)};
  \end{tikzpicture}
  \end{minipage}\hfill
  \begin{minipage}[t]{.30\textwidth}
    \centering
    \vspace{0pt}
    \begin{tikzpicture}[scale=1,transform shape]
      \coordinate (fc1) at (0,1.15);
      \coordinate (fc2) at (-1.00,.35);
      \coordinate (fc3) at (-.63,-.81);
      \coordinate (fc4) at ( .63,-.81);
      \coordinate (fc5) at (1.00,.35);
      \coordinate (fc6) at (0,.05);
      \draw[cycle edge] (fc1)--(fc2)--(fc3)--(fc4)--(fc5)--cycle;
      \draw[internal edge] (fc3)--(fc6)--(fc5);
      \foreach \i in {1,...,6}
        \node[ordinary vertex] at (fc\i) {};
      \node[font=\scriptsize,above=2pt of fc1] {\(a_1\)};
      \node[font=\scriptsize,left=2pt of fc2] {\(a_2\)};
      \node[font=\scriptsize,below left=1pt of fc3] {\(a_3\)};
      \node[font=\scriptsize,below right=1pt of fc4] {\(a_4\)};
      \node[font=\scriptsize,right=2pt of fc5] {\(a_5\)};
      \node[font=\scriptsize,left=2pt of fc6] {\(a_6\)};
      \node at (0,-1.62) {\(A\)};
    \end{tikzpicture}
  \end{minipage}
  \caption{The three proposed equality graphs \(H_1,H_2,H_3\), the
    BCGY equality example \(G_3\), and its six-vertex unit \(A\).}
  \label{fig:conjecture-equality-graphs}
  \label{fig:graph-A}
\end{figure}

To formulate our results asymptotically, let \(c_{\mathrm{sub}}\) and
\(c_{\mathrm{cub}}\) denote the respective limsups of
\(\gamma(G)/\rho(G)\) over connected subcubic and connected cubic
graphs as \(\rho(G)\to\infty\). Our bounds are
\begin{equation}
\label{eq:main-asymptotic-bounds}
 c_{\mathrm{sub}}\geq\frac{13}{6},
 \qquad
 c_{\mathrm{cub}}\geq\frac{17}{8}.
\end{equation}
The lower bounds are witnessed by two explicit binary branching
families, indexed by \(t\geq0\), whose packing numbers tend to infinity.
The connected noncubic subcubic family \(\widehat B_t^\star\) has
domination--packing ratio tending to \(13/6\), while the connected
cubic family \(\widehat B_t^\bullet\) has ratio tending to \(17/8\).
Moreover, their gaps above \(2\rho\) are explicit:
\begin{equation}
\label{eq:intro-additive-gaps}
 \gamma(\widehat B_t^\star)
 -2\rho(\widehat B_t^\star)
 =
 \gamma(\widehat B_t^\bullet)
 -2\rho(\widehat B_t^\bullet)=2^{t+1}.
\end{equation}
Thus, unlike the BCGY family, both families strictly violate the
inequality in part~(1), with an unbounded additive violation. In
particular, the family \(\widehat B_t^\bullet\) disproves part~(1)
even within connected cubic graphs.

The two families follow a common rooted-profile scheme. Starting from
the seed \(B\), a base transfer reaches a balanced profile class;
binary recursion preserves that class under the same scalar
recurrence, and a final closure removes the boundary. Thus the exact
formulas follow from the profile scheme rather than from separate
calculations.

Section~\ref{sec:boundary-states} develops the boundary states and
rooted profiles, treats \(B\), the rooted operations, and the two
derived rooted graphs \(B^{\Join},B^{\ltimes}\), and records the common
connector and closing lemmas.
Section~\ref{sec:star-family} constructs \(\widehat B_t^\star\), and
Section~\ref{sec:bullet-family} constructs \(\widehat B_t^\bullet\).
Section~\ref{sec:consequences} gives the concluding remarks and
formulates three revised versions of Conjecture~\ref{conj:HLR}.

\section{Boundary states, rooted profiles, and basic operations}
\label{sec:boundary-states}

This section develops the common local language for the two
constructions.  We first define the boundary states and rooted
profiles, then compute the seed profile of \(B\) and derive the rooted
graphs \(B^{\Join}\) and \(B^{\ltimes}\) through rooted operations, and
finally record the connector composition and closing lemma used by
both families.

\subsection{Boundary states and rooted profiles}
A \emph{rooted graph} \((G,x)\) consists of a graph \(G\) and a
distinguished vertex \(x\), which we regard as a one-vertex boundary.
The boundary decoration is auxiliary and is forgotten after the
relevant copies are joined.

For \(X\subseteq V(G)\), put
\[
  d_G(x,X)\coloneqq\min_{v\in X}d_G(x,v),
  \qquad
  N_G[X]\coloneqq\bigcup_{v\in X}N_G[v],
\]
where \(d_G(x,\varnothing)=\infty\).
For such \(X\), we also define the boundary-state operators
\[
  \sigma_x^{\mathrm D}(X)\coloneqq\min\{2,d_G(x,X)\},
  \qquad
  \sigma_x^{\mathrm P}(X)\coloneqq\min\{3,d_G(x,X)\}.
\]
For domination, an \emph{\(x\)-boundary domination candidate} is a
set \(D\subseteq V(G)\) satisfying
\[
  V(G)\setminus\{x\}\subseteq N_G[D].
\]
Its boundary state is \(\sigma_x^{\mathrm D}(D)\): state \(0\) means
\(x\in D\), state \(1\) means \(x\notin D\) but
\(N_G(x)\cap D\ne\varnothing\), and state \(2\) means
\(N_G[x]\cap D=\varnothing\). Thus \(D\) dominates all of \(G\) in
states \(0\) and \(1\), but leaves \(x\) undominated in state \(2\).

For a packing \(P\subseteq V(G)\), the state
\(\sigma_x^{\mathrm P}(P)\) records
its truncated distance from \(x\). The states \(0,1,2,3\) correspond,
respectively, to distance \(0,1,2\), and at least \(3\), with the
empty packing included in state \(3\).

For \(i\in\{0,1,2\}\), let \(\gamma_i(G,x)\) be the minimum
cardinality of an \(x\)-boundary domination candidate \(D\) with
\(\sigma_x^{\mathrm D}(D)=i\), using the value \(+\infty\) when no such candidate
exists. For \(i\in\{0,1,2,3\}\), let \(\rho_i(G,x)\) be the maximum
cardinality of a packing \(P\) with \(\sigma_x^{\mathrm P}(P)=i\), using the value
\(-\infty\) when no such packing exists. The \emph{rooted domination
profile} and the \emph{rooted packing profile} are
\[
  \gamma_\bullet(G,x)
  \coloneqq
  \bigl(
    \gamma_0(G,x),
    \gamma_1(G,x),
    \gamma_2(G,x)
  \bigr),
  \qquad
  \rho_\bullet(G,x)
  \coloneqq
  \bigl(
    \rho_0(G,x),
    \rho_1(G,x),
    \rho_2(G,x),
    \rho_3(G,x)
  \bigr).
\]

Since a boundary domination candidate dominates all of \(G\) exactly
in states \(0\) and \(1\), while every packing lies in one of the four
packing states, the ordinary parameters are recovered by
\[
  \gamma(G)=\min\{\gamma_0(G,x),\gamma_1(G,x)\},
  \qquad
  \rho(G)=\max_{0\leq i\leq3}\rho_i(G,x).
\]

For example, root the edge \(K_2\) at one endpoint \(x\). For
domination, selecting \(x\) gives state \(0\), and selecting the other
endpoint gives state \(1\); state \(2\) is infeasible. For packing,
the same two singletons give states \(0\) and \(1\). No packing has
distance exactly \(2\) from \(x\), whereas the empty packing has
distance \(\infty\) and hence state \(3\). Therefore
\begin{equation}\label{eq:rooted-edge-profiles}
  \gamma_\bullet(K_2,x)=(1,1,+\infty),
  \qquad
  \rho_\bullet(K_2,x)=(1,1,-\infty,0).
\end{equation}
Here \(+\infty\) and \(-\infty\) record infeasibility in the
minimization and maximization profiles, respectively; the state-\(3\)
packing value \(0\), by contrast, is attained by the empty packing.

When the distinguished root is fixed by the construction or is clear
from the context, we suppress it from the notation. Thus, for a rooted
graph \((H,x)\) whose root is understood,
\[
  \gamma_i(H)\coloneqq\gamma_i(H,x),\qquad
  \rho_i(H)\coloneqq\rho_i(H,x),
\]
and
\[
  \gamma_\bullet(H)\coloneqq\gamma_\bullet(H,x),\qquad
  \rho_\bullet(H)\coloneqq\rho_\bullet(H,x).
\]

The rooted profiles above are one-vertex instances of the standard
practice of encoding partial solutions by finitely many boundary states
in dynamic programming over graph decompositions
\cite{GarneroPaulSauThilikos2015,vanRooijBodlaenderEtAl2018}.
In our constructions, we combine these profiles using the fixed
connector graphs introduced in
Sections~\ref{sec:star-family} and \ref{sec:bullet-family}.

\subsection{\texorpdfstring{The seed graph \(B\)}{The seed graph B}}

Let \(B\) be the subcubic graph obtained from the cycle
\(C_9=b_1b_2\cdots b_9b_1\) by adding the four edges
\[
  b_2b_6,\qquad b_3b_8,\qquad b_4b_7,\qquad b_5b_9.
\]
Set \(b=b_1\) and regard \((B,b)\) as rooted at \(b\).
The root \(b\) has degree \(2\), while every other vertex has degree
\(3\). Since the defining \(9\)-cycle is spanning, \(B\) is connected
and bridgeless; see
Figure~\ref{fig:B-shapes}\subref{fig:block-B}.

\begin{lemma}\label{lem:profile-B}
The rooted domination and packing profiles of
\((B,b)\) are
\[
  \gamma_\bullet(B)=(3,3,3)
  \qquad\text{and}\qquad
  \rho_\bullet(B)=(2,1,1,1).
\]
\end{lemma}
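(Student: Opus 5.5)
The plan is to compute the seven entries by a direct finite analysis of the nine-vertex graph. For each entry, exhibit an explicit set attaining the value, and prove optimality with a short counting argument supplemented by a small case check. First record the adjacency lists:
\(N(b_1)=\{b_2,b_9\}\), \(N(b_2)=\{b_1,b_3,b_6\}\), \(N(b_3)=\{b_2,b_4,b_8\}\), \(N(b_4)=\{b_3,b_5,b_7\}\), \(N(b_5)=\{b_4,b_6,b_9\}\), \(N(b_6)=\{b_2,b_5,b_7\}\), \(N(b_7)=\{b_4,b_6,b_8\}\), \(N(b_8)=\{b_3,b_7,b_9\}\), \(N(b_9)=\{b_1,b_5,b_8\}\).
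A breadth-first search from \(b\) gives distance layers \(\{b_2,b_9\}\), \(\{b_3,b_5,b_6,b_8\}\), \(\{b_4,b_7\}\).

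\emph{Domination upper bounds.} The witnesses are \(\{b_1,b_4,b_7\}\) for state \(0\), \(\{b_2,b_4,b_8\}\) for state \(1\), and \(\{b_3,b_5,b_7\}\) for state \(2\). For each, verify that every vertex of \(V(B)\setminus\{b\}\) is dominated and that the set has the stated boundary state.

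\emph{Domination lower bound.} It suffices to show that no two vertices \(u,v\) satisfy \(\{b_2,\dots,b_9\}\subseteq N[u]\cup N[v]\).
\begin{itemize}
\item Each \(N[w]\) meets \(\{b_2,\dots,b_9\}\) in at most \(4\) vertices.
\item For \(w\in\{b_1,b_2,b_9\}\) this intersection has at most \(3\) vertices, so neither \(u\) nor \(v\) can lie in \(\{b_1,b_2,b_9\}\).
\item Hence \(u,v\in\{b_3,\dots,b_8\}\), and \(N[u],N[v]\) must partition \(\{b_2,\dots,b_9\}\).
\end{itemize}
The complement of \(N[b_3]=\{b_2,b_3,b_4,b_8\}\) is \(\{b_5,b_6,b_7,b_9\}\), which is no closed neighbourhood. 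Checking the remaining five choices of \(u\) the same way finishes this step. It gives \(\gamma_i(B)\ge3\) for all \(i\), so \(\gamma_\bullet(B)=(3,3,3)\).

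\emph{Packing upper bound.} Since \(|N[b]|=3\) and \(|N[w]|=4\) for \(w\ne b\), three disjoint closed neighbourhoods would need at least \(3+4+4=11>9\) vertices. So \(\rho(B)\le2\). The set \(\{b_1,b_4\}\) is a packing in state \(0\), since \(d(b_1,b_4)=3\); thus \(\rho_0(B)=2\). The singletons \(\{b_2\}\), \(\{b_3\}\), \(\{b_4\}\) give \(\rho_1,\rho_2,\rho_3\ge1\).

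\emph{Packing lower bound.} The remaining, and main, step is to show that \(B-b\) contains no pair of vertices at distance at least \(3\) in \(B\). Then every packing of size \(2\) contains \(b\), so \(\rho_1=\rho_2=\rho_3=1\).

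I would first try a counting shortcut. Two such vertices \(u,v\ne b\) would have disjoint closed neighbourhoods of size \(4\), covering all but one vertex of \(B\). This alone does not force a contradiction, so I would instead run a BFS from each of \(b_2,\dots,b_9\) and record that its eccentricity within \(B-b\), measured with distances in \(B\), is at most \(2\).
\begin{itemize}
\item From \(b_4\) the only vertex at distance \(3\) is \(b_1\), by the layering above read in reverse.
\item The reflection \(b_i\mapsto b_{11-i}\) (indices mod \(9\)) fixes \(b_1\) and preserves the edge set: it swaps the chords \(b_2b_6\leftrightarrow b_5b_9\) and \(b_3b_8\leftrightarrow b_3b_8\), and fixes \(b_4b_7\). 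This symmetry halves the checks.
\end{itemize}
This finite distance check is the only tedious part, and it is where I expect care to be needed.
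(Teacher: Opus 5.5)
Your proposal is correct and follows essentially the paper's route: explicit witnesses plus closed-neighbourhood counting for the upper bounds, and for the packing the fact that any two nonroot vertices are at distance at most \(2\). Your witnesses differ from the paper's but all check out. The paper organizes things state by state: it uses \(b_2,b_9\) having eccentricity \(2\) and the vertices of \(\{b_3,\dots,b_8\}\) being pairwise within distance \(2\), whereas you use a single uniform partition check for domination.

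Two small remarks. First, your domination partition check already shows that no two vertices of \(\{b_3,\dots,b_8\}\) have disjoint closed neighbourhoods. For the packing step, only pairs involving \(b_2\) (or \(b_9\), by your reflection) therefore remain, and one BFS from \(b_2\) settles them. Second, the claim about \(b_4\) comes from a BFS from \(b_4\) itself, with layers \(\{b_3,b_5,b_7\}\), \(\{b_2,b_6,b_8,b_9\}\), \(\{b_1\}\); it does not follow from reading the layering at \(b\) in reverse.
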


\begin{table}
\centering
\caption{Exact boundary profiles and witnesses for the rooted seed \((B,b)\).}
\label{tab:B-profile-witnesses}
{\renewcommand{\arraystretch}{1.3}
\setlength{\tabcolsep}{4pt}
\begin{tabularx}{\linewidth}{
  |>{\centering\arraybackslash}p{1.2cm}||
   >{\centering\arraybackslash}p{1.6cm}|
   >{\centering\arraybackslash}X||
   >{\centering\arraybackslash}p{1.9cm}|
   >{\centering\arraybackslash}X|
}
\hline
\textbf{State}&
\(\boldsymbol{\gamma_i(B)}\)&
\textbf{Domination witness}&
\(\boldsymbol{\rho_i(B)}\)&
\textbf{Packing witness}
\\
\hline
\hline
\(0\)&
\(3\)&
\(\{b,b_5,b_8\}\)&
\(2\)&
\(\{b,b_7\}\)
\\
\hline
\(1\)&
\(3\)&
\(\{b_2,b_5,b_7\}\)&
\(1\)&
\(\{b_2\}\)
\\
\hline
\(2\)&
\(3\)&
\(\{b_3,b_4,b_5\}\)&
\(1\)&
\(\{b_3\}\)
\\
\hline
\(3\)&
{}&
{}&
\(1\)&
\(\{b_4\}\)
\\
\hline
\end{tabularx}}
\end{table}

\begin{proof}
Table~\ref{tab:B-profile-witnesses} gives the required domination upper bounds and
packing lower bounds. It remains to prove the matching reverse
inequalities.

For domination, states \(0\) and \(1\) follow from closed-neighborhood
counting. In state \(0\), the selected root covers only two nonroot
vertices, and one additional closed neighborhood covers at most four
nonroot vertices; thus two selected vertices cannot dominate all eight
required nonroot vertices. In state \(1\), two selected nonroot vertices
have closed neighborhoods whose union has size at most \(8<9\). Hence
\(\gamma_0(B),\gamma_1(B)\geq 3\).
For state \(2\), let
\[
  U=V(B)\setminus N_B[b]
   =\{b_3,b_4,b_5,b_6,b_7,b_8\}.
\]
Any two distinct vertices of \(U\) are at distance at most \(2\), so
their closed neighborhoods intersect. Since each has size \(4\), two
vertices of \(U\) dominate at most \(7\) vertices and hence cannot
dominate all eight nonroot vertices. Thus \(\gamma_2(B)\geq3\), and
therefore
\(\gamma_\bullet(B)=(3,3,3)\).

For packing state \(0\), the only vertices at distance at least \(3\)
from \(b\) are \(b_4\) and \(b_7\), which are adjacent. Thus at most
one vertex can be added to the selected root. In state \(1\), a selected
neighbor of the root is either \(b_2\) or \(b_9\), and every vertex of
\(B\) is within distance \(2\) of that neighbor. In each of states
\(2\) and \(3\), all
selected vertices lie in \(U\), and any two vertices of \(U\) are at
distance at most \(2\). Together with the witnesses in
Table~\ref{tab:B-profile-witnesses}, this
gives \( \rho_\bullet(B)=(2,1,1,1)\).
\end{proof}

\subsection{\texorpdfstring{Root extension and rooted join; the derived graphs
\(B^{\Join}\) and \(B^{\ltimes}\)}{Root extension and rooted join; the derived graphs B-Join and B-left-times}}

Having determined the profile of the seed, we now introduce the two
basic rooted operations that produce the derived rooted graphs used in the
branching constructions.

\begin{definition}
Let \((H,x)\) and \((K,y)\) be vertex-disjoint rooted graphs.  We also
regard \(K_1\) as rooted at its unique vertex.
\begin{enumerate}
\renewcommand{\labelenumi}{(\alph{enumi})}
\item The \emph{pendant-root extension} \((H^\uparrow,p)\) is obtained
by adding a new vertex \(p\) and the edge \(px\).

\item The \emph{rooted join} \((H\odot K,r)\) is obtained by identifying
\(x\) and \(y\) to a common root \(r\).

\item The \emph{symmetric rooted join} associated with \(H\) is
\[
  H^{\Join}\coloneqq
  H^\uparrow\odot H^\uparrow.
\]

\item The \emph{asymmetric rooted shape} associated with \(H\) is
\[
  H^{\ltimes}\coloneqq
  \bigl(H^\uparrow\odot K_1^\uparrow\bigr)^\uparrow.
\]
Here \(K_1^\uparrow\) is the rooted edge \(K_2\).
The root added by the final extension is denoted by \(u\).
\end{enumerate}

The symbols \(\Join\) and \(\ltimes\) abbreviate the displayed
compositions of \(\uparrow\) and \(\odot\), while indicating the shapes
of the resulting rooted graphs.
\end{definition}

For the seed \((B,b)\), these constructions give the rooted graphs
\(B^{\Join}\) and \(B^{\ltimes}\) shown in
Figure~\ref{fig:B-shapes}. In \(B^{\ltimes}\), denote by \(w\) the
nonroot leaf on the \(K_1^\uparrow\) side.

\begin{figure}
  \centering
  \begin{subfigure}[b]{.21\textwidth}
    \centering
    \begin{tikzpicture}[scale=.78,transform shape]
      \pic[rotate=90,transform shape] (B) {gadgetB};
      \node[font=\scriptsize,anchor=south] at (90:1.24) {\(b_1=b\)};
      \foreach \i/\ang in {2/130,3/170,4/-150,5/-110,
                             6/-70,7/-30,8/10,9/50}{
        \node[font=\scriptsize] at (\ang:1.34) {\(b_{\i}\)};
      }
      \path (0,-1.994)--(0,-1.995);
    \end{tikzpicture}
    \caption{The seed \((B,b)\).}
    \label{fig:block-B}
  \end{subfigure}\hfill
  \begin{subfigure}[b]{.49\textwidth}
    \centering
    \begin{tikzpicture}[scale=.78,transform shape]
      \pic (BJ) {blockBJoin};
      \node[attachment vertex] at (BJ-attachmentone) {};
      \node[attachment vertex] at (BJ-attachmenttwo) {};
      \node[font=\small] at (-2.70,-1.72) {\(B_L\)};
      \node[font=\small] at ( 2.70,-1.72) {\(B_R\)};
      \node[below right=1.5pt and 2pt of BJ-attachmentone,font=\scriptsize]
        {\(b_L\)};
      \node[below left=1.3pt and 2pt of BJ-attachmenttwo,font=\scriptsize]
        {\(b_R\)};
      \node[above=3pt of BJ-root,font=\scriptsize] {\(r\)};
    \end{tikzpicture}
    \caption{The symmetric rooted join \((B^{\Join},r)\).}
    \label{fig:block-BJoin}
  \end{subfigure}\hfill
  \begin{subfigure}[b]{.26\textwidth}
    \centering
    \makebox[\linewidth][r]{%
    \begin{tikzpicture}[scale=.94,transform shape]
      \pic[scale=.83,transform shape]
        (JB) at (-1.25,.35) {gadgetB};
      \coordinate (jr) at (.386,.35);
      \coordinate (jw) at (1.15,.35);
      \coordinate (ju) at (.386,1.55);
      \draw[bridge edge] (JB-root)--(jr)--(jw);
      \draw[bridge edge] (jr)--(ju);
      \node[attachment vertex] at (JB-root) {};
      \node[center vertex] at (jr) {};
      \node[attachment vertex] at (jw) {};
      \node[root vertex] at (ju) {};
      \node[font=\scriptsize,below=2pt of jr] {\(r\)};
      \node[font=\scriptsize,right=2pt of jw] {\(w\)};
      \node[font=\scriptsize,above=2pt of ju] {\(u\)};
      \node[font=\small] at (-1.25,-.75)
        {\(B\)};
    \end{tikzpicture}
    }
    \caption{The asymmetric rooted shape \((B^{\ltimes},u)\).}
    \label{fig:graph-Bltimes}
  \end{subfigure}

  \caption{The rooted seed \((B,b)\) and its two derived rooted graphs.}
  \label{fig:B-shapes}
\end{figure}

\begin{lemma}
\label{lem:pendant-root-rule}
Let \((H,x)\) be a rooted graph.  Then
\[
\begin{aligned}
 \gamma_\bullet(H^\uparrow)
 &=
 \bigl(1+\min_i\gamma_i(H),\gamma_0(H),\gamma_1(H)\bigr),\\
 \rho_\bullet(H^\uparrow)
 &=
 \bigl(1+\max\{\rho_2(H),\rho_3(H)\},
       \rho_0(H),\rho_1(H),
       \max\{\rho_2(H),\rho_3(H)\}\bigr).
\end{aligned}
\]
\end{lemma}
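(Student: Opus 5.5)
The plan is to read off the profile of \((H^\uparrow,p)\) by restricting and extending solutions along the single new edge \(px\). A candidate or packing in \(H^\uparrow\) has a restriction to \(V(H)\), and that restriction's state at \(x\) determines the state at \(p\). The only graph-theoretic input is that \(p\) is a leaf whose unique neighbor is \(x\). Hence for \(X\subseteq V(H)\) and \(v\in V(H)\) we have \(d_{H^\uparrow}(p,v)=1+d_H(x,v)\), and distances inside \(H\) are unchanged in \(H^\uparrow\) because a shortest path never passes through a leaf.

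\textbf{Domination.} Take a \(p\)-boundary candidate \(D\) of \(H^\uparrow\) and put \(D'=D\setminus\{p\}\).

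In state \(0\), \(D\) contains \(p\), which dominates \(x\). The only requirement on \(D'\) is that it dominate \(V(H)\setminus\{x\}\), so \(D'\) is an \(x\)-boundary candidate in some state \(i\). Conversely, adjoining \(p\) to any \(x\)-boundary candidate of \(H\) gives a state-\(0\) candidate of \(H^\uparrow\). This yields \(1+\min_i\gamma_i(H)\).

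In state \(1\), \(p\notin D\) and \(x\in D\). Since \(x\) is in the set, the requirement that \(x\) be dominated is automatic, and \(D\) is exactly an \(x\)-candidate of \(H\) in state \(0\). This yields \(\gamma_0(H)\).

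In state \(2\), \(D\cap\{p,x\}=\varnothing\), and \(x\neq p\) must still be dominated, necessarily from within \(N_H(x)\). So \(D\) is exactly a state-\(1\) candidate of \(H\), giving \(\gamma_1(H)\). Note that a state-\(2\) candidate of \(H\) is not allowed here, because \(x\) must be dominated.

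\textbf{Packing.} Take a packing \(P\) of \(H^\uparrow\).

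If \(p\in P\), then \(x\notin P\) (adjacent to \(p\)) and no neighbor of \(x\) lies in \(P\) (distance \(2\) from \(p\)). So \(P\setminus\{p\}\) is a packing of \(H\) with \(x\)-state \(2\) or \(3\). Conversely, any such packing stays a packing after adding \(p\), since its vertices lie at distance at least \(3\) from \(p\). This gives the first entry, \(1+\max\{\rho_2,\rho_3\}\).

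If \(p\notin P\), then \(P\subseteq V(H)\) is a packing of \(H\); packings agree because the internal distances agree. Its \(p\)-state is \(\min\{3,1+\sigma_x^{\mathrm P}(P)\}\). So \(x\)-states \(0,1\) map to \(p\)-states \(1,2\), and \(x\)-states \(2,3\) both map to \(p\)-state \(3\). This gives the remaining entries \(\rho_0(H)\), \(\rho_1(H)\), and \(\max\{\rho_2,\rho_3\}\). For state \(3\) the empty packing is included, so the value is well defined.

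Infeasibility conventions transfer verbatim, since each case is a bijection between the relevant classes of sets. There is no real obstacle; the only point needing care is the state-\(2\) domination case, where one must check that \(x\) is required to be dominated from inside \(H\). That excludes \(\gamma_2(H)\) from this entry, while state \(0\) legitimately allows all three states.
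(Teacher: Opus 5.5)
Your proposal is correct and takes the same route as the paper. For domination, you split on the state at the new root \(p\): \(p\) selected, old root \(x\) selected, or \(x\) unselected but dominated inside \(H\). For packing, you shift distances from \(x\) by one and truncate at \(3\). Your write-up just makes explicit the restriction and extension arguments that the paper's two-line proof leaves implicit.
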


\begin{proof}
The new root is selected in state \(0\); the old root is selected in
state \(1\); and the old root is unselected but internally dominated in
state \(2\). The packing formulas follow by adding one to all distances
from the old root and truncating at \(3\).
\end{proof}

The next lemma starts from a flat domination profile and a one-unit
packing asymmetry in state \(0\), and computes how the two derived
operations transform these data.

\begin{lemma}
\label{lem:symbolic-shape-transfers}
Let \((H,x)\) be a rooted graph satisfying
\[
  \gamma_\bullet(H)=(\alpha,\alpha,\alpha),
  \qquad
  \rho_\bullet(H)
  =(\beta+1,\beta,\beta,\beta),
\]
where \(\alpha,\beta\geq0\).
Then
\begin{equation}\label{eq:symmetric-shape-transfer}
  \gamma_\bullet(H^{\Join})
  =(2\alpha+1,2\alpha,2\alpha),
  \qquad
  \rho_\bullet(H^{\Join})
  =(2\beta+1,2\beta+1,2\beta,2\beta),
\end{equation}
and
\begin{equation}\label{eq:asymmetric-shape-transfer}
  \gamma_\bullet(H^{\ltimes})
  =(\alpha+2,\alpha+1,\alpha+1),
  \qquad
  \rho_\bullet(H^{\ltimes})
  =(\beta+1,\beta+1,\beta+1,\beta).
\end{equation}
\end{lemma}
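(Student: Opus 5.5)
The plan is to derive \eqref{eq:symmetric-shape-transfer} and \eqref{eq:asymmetric-shape-transfer} mechanically from Lemma~\ref{lem:pendant-root-rule} together with one further rule, a \emph{rooted join rule} expressing the profiles of \(H\odot K\) through those of \((H,x)\) and \((K,y)\). I would first state and prove this join rule, and then run the arithmetic.

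\textbf{The join rule.} Since the common root \(r\) is a cut vertex of \(H\odot K\), every shortest path between two vertices on the same side stays on that side. Every path between vertices on different sides passes through \(r\).

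For domination, a candidate \(D\) for \(H\odot K\) restricts to candidates \(D\cap V(H)\) and \(D\cap V(K)\) on the two sides, and conversely. This gives
\[
\gamma_0(H\odot K)=\gamma_0(H)+\gamma_0(K)-1,
\]
\[
\gamma_1(H\odot K)=\min\{\gamma_1(H)+\gamma_1(K),\,\gamma_1(H)+\gamma_2(K),\,\gamma_2(H)+\gamma_1(K)\},
\]
\[
\gamma_2(H\odot K)=\gamma_2(H)+\gamma_2(K).
\]
For packing, a set \(P\) is a packing exactly when its two restrictions are packings with states \(i,j\) satisfying one of two conditions. Either \(i=j=0\) (the root is shared, so we subtract \(1\)), or \(i,j\geq1\) and \(i+j\geq3\). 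The second condition is where we use that distances across sides add through \(r\), and that the state \(3\) covers the empty set. The resulting state of \(P\) is \(\min\{i,j\}\). Hence \(\rho_0=\rho_0(H)+\rho_0(K)-1\), and for \(k\geq1\), \(\rho_k\) is the maximum of \(\rho_i(H)+\rho_j(K)\) over pairs with \(\min\{i,j\}=k\), \(i,j\geq1\), \(i+j\geq3\).

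I expect the main obstacle to be justifying the packing decomposition. Specifically, one must check that the cross-side condition \(i+j\geq3\) is exactly right once states are truncated at \(3\). This holds because the truncation only identifies distances that are already \(\geq3\).

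\textbf{Computations.} By Lemma~\ref{lem:pendant-root-rule},
\[
\gamma_\bullet(H^\uparrow)=(\alpha+1,\alpha,\alpha),\qquad \rho_\bullet(H^\uparrow)=(\beta+1,\beta+1,\beta,\beta).
\]

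Joining two copies of \(H^\uparrow\) gives the following.
\begin{itemize}
\item For domination, \(\gamma_0=2\alpha+1\), \(\gamma_1=\gamma_2=2\alpha\).
\item For packing, \(\rho_0=2\beta+1\).
\item \(\rho_1=2\beta+1\), using the pair \((1,2)\) or \((1,3)\).
\item \(\rho_2=\rho_3=2\beta\).
\end{itemize}
This is \eqref{eq:symmetric-shape-transfer}.

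For \(H^{\ltimes}\), join \(H^\uparrow\) with the rooted edge, whose profiles are given in \eqref{eq:rooted-edge-profiles}.
\begin{itemize}
\item For domination this yields \((\alpha+1,\alpha+1,+\infty)\).
\item For packing, \(\rho_0=\beta+1\).
\item \(\rho_1=\beta+1\), via the pair \((1,3)\) or \((2,1)\).
\item \(\rho_2=\beta\), via \((2,3)\); the pairs with \(j=2\) are infeasible.
\item \(\rho_3=\beta\).
\end{itemize}
A final application of Lemma~\ref{lem:pendant-root-rule} then gives
\[
\gamma_\bullet=(\alpha+2,\alpha+1,\alpha+1),\qquad \rho_\bullet=(\beta+1,\beta+1,\beta+1,\beta),
\]
which is \eqref{eq:asymmetric-shape-transfer}.

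The \(\pm\infty\) conventions must be tracked carefully throughout, but with them the arithmetic is routine.
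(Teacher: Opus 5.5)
Your proposal is correct and follows essentially the same route as the paper. The paper likewise applies the pendant-root rule, then the restriction--gluing state matching at the identified root: the shared root is counted twice in the pair \((0,0)\), and positive packing states are compatible exactly when \(i+j\geq3\), with output state \(\min\{i,j\}\). It applies this to \(H^\uparrow\odot H^\uparrow\) and to \(H^\uparrow\odot K_2\), and finishes with one more pendant-root extension. All of your intermediate profiles agree with the paper's, including \((\alpha+1,\alpha+1,+\infty)\) and \((\beta+1,\beta+1,\beta,\beta)\) for \(H^\uparrow\odot K_2\).
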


\begin{proof}
Lemma~\ref{lem:pendant-root-rule} first gives
\[
  \gamma_\bullet(H^\uparrow)
  =(\alpha+1,\alpha,\alpha),
  \qquad
  \rho_\bullet(H^\uparrow)
  =(\beta+1,\beta+1,\beta,\beta).
\]
For \(H^{\Join}\), let \(H_L,H_R\) be two vertex-disjoint copies of
\(H\), and let \(p_L,p_R\) be the roots added in
\(H_L^\uparrow,H_R^\uparrow\). The operation \(\odot\) identifies
\(p_L,p_R\) to the common root \(r\).

The two rooted extensions meet only at \(r\). Thus, in
\(H_L^\uparrow\odot H_R^\uparrow\), every boundary domination
candidate or packing restricts to the corresponding local objects on
the two sides, with the membership decision for \(r\) represented in
both restrictions. Conversely, two compatible local witnesses whose
membership decisions agree at \(r\) combine to a global witness.
Distances within either side are unchanged by the identification, and
every path between the two sides passes through \(r\).

In the following state pairs, the first and second entries are the
states of the restrictions to \(H_L^\uparrow\) and
\(H_R^\uparrow\), respectively. For domination, output state \(0\)
corresponds to \((0,0)\), with the common selected root counted twice;
output state \(1\) corresponds to
\((1,1), (1,2),(2,1)\),
and output state \(2\) corresponds to \((2,2)\). These cases determine
the domination profile.
For packing, output state \(0\) again corresponds to \((0,0)\), with
the common selected root counted twice. For an ordered pair \((i,j)\)
of positive local states, compatibility is equivalent to
\(i+j\geq3\), and the output state is \(\min\{i,j\}\). For output
states \(1,2,3\), the maximum values are attained, for example, by
\((1,2)\), \((2,2)\), and \((3,3)\), respectively. These cases give
\eqref{eq:symmetric-shape-transfer}.

For \(H^{\ltimes}\), the same restriction--gluing argument applies
to \(H^\uparrow\odot K_2\), where \(K_2=K_1^\uparrow\) is the rooted
edge whose profiles are given in
\eqref{eq:rooted-edge-profiles}.
The compatible state pairs give
\[
  \gamma_\bullet(H^\uparrow\odot K_2)
  =(\alpha+1,\alpha+1,+\infty),
  \qquad
  \rho_\bullet(H^\uparrow\odot K_2)
  =(\beta+1,\beta+1,\beta,\beta).
\]
One more application of Lemma~\ref{lem:pendant-root-rule} yields
\eqref{eq:asymmetric-shape-transfer}.
\end{proof}

The two derived graphs play complementary roles. The symmetric graph
\(H^{\Join}\) duplicates the seed contribution but retains a boundary
imbalance, whereas \(H^{\ltimes}\) supplies the complementary
correction. They are later used to form the initial rooted graphs,
which are then propagated by connector composition.

We use one common composition for all three connectors in the two
branching constructions.

\begin{definition}
\label{def:connector-composition}
A \emph{rooted connector} is a connected rooted graph \((K,y)\) with
an ordered pair \((z_1,z_2)\) of distinct vertices in
\(V(K)\setminus\{y\}\), called its \emph{child ports}. Let
\((H_1,x_1)\) and \((H_2,x_2)\) be vertex-disjoint rooted graphs,
disjoint from \(K\). Write
\[
  (H_1\ostar_K H_2,y)
\]
for the rooted graph obtained by adding the edges
\(z_1x_1,z_2x_2\). Here, the subscript records the connector, and no
vertices are identified.
\end{definition}

The next lemma shows how the rooted profiles of
\(H_1\ostar_K H_2\) are obtained from the two child profiles and the
possible choices inside \(K\).

\begin{lemma}
\label{lem:edge-attached-connector}
Let \((K,y)\) be a rooted connector with ordered child ports
\((z_1,z_2)\), and let \((H_i,x_i)\), \(i=1,2\), be vertex-disjoint
rooted graphs, disjoint from \(K\). Put
\(G=H_1\ostar_K H_2\).

For \(X\subseteq V(G)\), put \(X_K=X\cap V(K)\) and
\(X_i=X\cap V(H_i)\) for \(i=1,2\).

\begin{enumerate}
\renewcommand{\labelenumi}{(\alph{enumi})}
\item For domination, let \(D\subseteq V(G)\). For \(i=1,2\), suppose
that \(D_i\) is a boundary domination candidate in \((H_i,x_i)\), and
put \(s_i=\sigma_{x_i}^{\mathrm D}(D_i)\).
Then \(D\) is a \(y\)-boundary domination candidate if and only if,
for \(i=1,2\), the port \(z_i\) belongs to \(D_K\) whenever \(s_i=2\),
every vertex of \(V(K)\setminus\{y,z_1,z_2\}\) is dominated by \(D_K\),
and each port \(z_i\) is either dominated by \(D_K\) or has \(s_i=0\).
Moreover,
\[
 \sigma_y^{\mathrm D}(D)=\min\{2,d_K(y,D_K)\},
 \qquad
 |D|\geq |D_K|+\sum_{i=1}^2\gamma_{s_i}(H_i,x_i).
\]

\item For packing, let \(P\subseteq V(G)\). Suppose that \(P_K\) is a
packing in \(K\) and, for \(i=1,2\), that \(P_i\) is a packing in
\(H_i\); put \(s_i=\sigma_{x_i}^{\mathrm P}(P_i)\).
Then \(P\) is a packing in \(G\) if and only if
\[
 s_i+1+d_K(z_i,P_K)\ge3,
 \qquad i=1,2.
\]
Moreover,
\[
 \sigma_y^{\mathrm P}(P)
 =\min\left\{3,d_K(y,P_K),
       \min_{i=1,2}\bigl(d_K(y,z_i)+1+s_i\bigr)\right\},
 \qquad
 |P|\leq |P_K|+\sum_{i=1}^2\rho_{s_i}(H_i,x_i).
\]
\end{enumerate}
The two bounds are attained by extremal child witnesses. Minimizing
in~(a) and maximizing in~(b), separately in each output state, gives
\(\gamma_\bullet(G,y)\) and \(\rho_\bullet(G,y)\).
\end{lemma}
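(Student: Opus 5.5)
The argument is a direct local analysis of the graph \(G=H_1\ostar_K H_2\). Its structure is simple: \(V(G)\) is the disjoint union \(V(K)\cup V(H_1)\cup V(H_2)\), and the only edges between the three parts are \(z_1x_1\) and \(z_2x_2\). I would record two consequences of this first. For a vertex \(v\in V(H_i)\), we have \(N_G[v]=N_{H_i}[v]\) unless \(v=x_i\), in which case \(z_i\) is added. Similarly, for \(v\in V(K)\), \(N_G[v]=N_K[v]\) unless \(v=z_i\), in which case \(x_i\) is added. Every path leaving \(H_i\) passes through the edge \(x_iz_i\). Hence distances inside \(K\) and inside \(H_i\) are unchanged. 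Moreover, for \(u\in V(H_i)\) and \(v\in V(K)\) we get \(d_G(u,v)=d_{H_i}(u,x_i)+1+d_K(z_i,v)\), and across the two children \(d_G\) is \(d_{H_1}(\cdot,x_1)+1+d_K(z_1,z_2)+1+d_{H_2}(x_2,\cdot)\).

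For part~(a), I would check the domination requirement vertex class by vertex class.

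\begin{itemize}
\item A vertex of \(H_i\) other than \(x_i\) is dominated by \(D_i\), since \(D_i\) is a candidate.
\item The vertex \(x_i\) is dominated in \(G\) if and only if \(s_i\le1\) or \(z_i\in D_K\). This gives the first condition.
\item A vertex of \(K\setminus\{y,z_1,z_2\}\) has all its neighbours in \(K\). It must therefore be dominated by \(D_K\).
\item The port \(z_i\) is dominated if and only if \(D_K\) dominates it or \(x_i\in D\), that is, \(s_i=0\).
\item The root \(y\) is exempt, since \(y\ne z_i\) and its closed neighbourhood lies in \(K\).
\end{itemize}

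The same observation about \(N_G[y]\subseteq V(K)\) shows that \(d_G(y,D)\le1\) if and only if \(d_K(y,D_K)\le1\). Hence the truncated state is \(\min\{2,d_K(y,D_K)\}\). The cardinality bound follows from \(|D_i|\ge\gamma_{s_i}(H_i,x_i)\), because \(D_i\) is a candidate in state \(s_i\).

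For part~(b), \(P\) is a packing if and only if all pairwise distances are at least \(3\). Pairs inside one part are fine by hypothesis, because distances are preserved. For a pair \(u\in P_i\), \(v\in P_K\), the distance formula gives the condition \(d(u,x_i)+1+d_K(z_i,v)\ge3\). Minimizing over \(u\) and \(v\), and observing that truncation at \(3\) does not affect the inequality, we obtain exactly \(s_i+1+d_K(z_i,P_K)\ge3\).

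The step I expect to be the main obstacle is the case of a pair \(u\in P_1\), \(v\in P_2\), which the stated criterion does not mention. Here the distance is at least \(2+d_K(z_1,z_2)\ge3\), since \(z_1\ne z_2\). So these pairs are automatic, and this is precisely where distinctness of the ports is used. The state formula is then the truncated minimum of \(d_G(y,\cdot)\) over the three parts, using \(d_G(y,u)=d_K(y,z_i)+1+d_{H_i}(x_i,u)\) for \(u\in P_i\). Truncation at \(3\) commutes with the min, so the truncated \(s_i\) may be used. The bound \(|P|\le|P_K|+\sum_i\rho_{s_i}\) holds by definition.

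For attainment, fix the local data \(D_K\) or \(P_K\) and the states \(s_i\). The characterizations depend on the children only through \(s_i\), so replacing each \(D_i\) or \(P_i\) by an extremal witness of the same state keeps \(D\) or \(P\) feasible and keeps the output state unchanged. The bound is then attained with equality. Finally, every global solution decomposes in this way: the restriction \(D_i=D\cap V(H_i)\) is a candidate in \(H_i\), since vertices of \(H_i\) other than \(x_i\) can only be dominated from inside \(H_i\). Similarly, each restriction \(P_i\) of a packing is a packing. Therefore optimizing over \((D_K,s_1,s_2)\), respectively \((P_K,s_1,s_2)\), separately in each output state yields \(\gamma_\bullet(G,y)\) and \(\rho_\bullet(G,y)\).
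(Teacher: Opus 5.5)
Your proposal is correct and follows essentially the same route as the paper. Both arguments decompose along the two attachment edges $x_iz_i$ and check domination vertex by vertex, reading off the output state from $N_G[y]\subseteq V(K)$. Both use the distance formula $d_{H_i}(u,x_i)+1+d_K(z_i,v)$ for packing compatibility, handle cross-child pairs via $z_1\neq z_2$, and obtain attainment by substituting extremal child witnesses of the same state. Your version is only somewhat more explicit than the paper's, for instance in spelling out why restrictions of global solutions are child candidates and packings.
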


\begin{proof}
Every global object restricts to the connector and the two children,
and compatible restrictions combine, since \(x_iz_i\) is the only
edge joining \(H_i\) to \(K\), for \(i=1,2\).

For~(a),
state \(2\) on either side leaves the corresponding child root
undominated inside its child and therefore forces the corresponding
port into \(D_K\). A port not dominated by \(D_K\) can only be dominated
by a selected child root, corresponding to state \(0\). Since
\(y\notin\{z_1,z_2\}\), no child vertex is within distance \(1\) of
\(y\), so the output state is
\(\sigma_y^{\mathrm D}(D_K)\).

For~(b),
put \(\delta_i=d_{H_i}(x_i,P_i)\), so that
\(s_i=\min\{3,\delta_i\}\). Since \(z_ix_i\) is the unique attachment
edge, compatibility between \(P_K\) and \(P_i\) is equivalent to
\[
 \delta_i+1+d_K(z_i,P_K)\geq3,
\]
which is unchanged when \(\delta_i\) is replaced by \(s_i\). Every
cross-child pair has distance at least
\[
 \delta_1+1+d_K(z_1,z_2)+1+\delta_2\geq3,
\]
since \(z_1\ne z_2\), so no additional condition is needed. The same
attachment structure gives
\[
 d_G(y,P_K)=d_K(y,P_K),
 \qquad
 d_G(y,P_i)=d_K(y,z_i)+1+\delta_i.
\]
Taking the minimum of these three distances and applying
\(\min\{3,\cdot\}\) gives the displayed output formula. Finally, the
three pieces are vertex-disjoint, so
cardinalities add; extremal child witnesses attain the displayed
optima.
\end{proof}

\subsection{The closing lemma}
\label{sec:closing-lemma}

For a rooted graph \((H,x)\), write \(\widehat H\) for the ordinary
unrooted graph obtained from two vertex-disjoint rooted copies
\((H_L,x_L)\) and \((H_R,x_R)\) of \((H,x)\) by adding the edge
\(x_Lx_R\) and then forgetting both root designations. The following lemma converts the
rooted profile of \(H\) into the ordinary domination and packing
numbers of \(\widehat H\).

\begin{lemma}
\label{lem:profile-closure}
Let \((H,x)\) be a rooted graph satisfying
\[
  \gamma_\bullet(H)
  =(\alpha,\alpha,\alpha),
  \qquad
  \rho_\bullet(H)
  =(\beta,\beta,\beta,\beta-1).
\]
Then
\[
  \gamma(\widehat H)=2\alpha,
  \qquad
  \rho(\widehat H)=2\beta.
\]
\end{lemma}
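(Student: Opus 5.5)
The plan is to treat \(\widehat H\) as two rooted copies glued along the single edge \(x_Lx_R\). Every global dominating set or packing then restricts to a pair of local objects, one in \((H_L,x_L)\) and one in \((H_R,x_R)\). Conversely, a compatible pair of local objects glues to a global one. This is the same restriction--gluing argument used in Lemma~\ref{lem:symbolic-shape-transfers} and Lemma~\ref{lem:edge-attached-connector}, now with the edge \(x_Lx_R\) in place of a connector. Throughout, \(s_L,s_R\) will denote the local boundary states.

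\textbf{Domination.} Let \(D\) dominate \(\widehat H\), and put \(D_L=D\cap V(H_L)\), \(D_R=D\cap V(H_R)\). Since the only cross edge is \(x_Lx_R\), every nonroot vertex of \(H_L\) is dominated inside \(H_L\). Hence \(D_L\) is an \(x_L\)-boundary domination candidate, and likewise for \(D_R\). This gives \(|D|\geq\gamma_{s_L}+\gamma_{s_R}=2\alpha\), because the profile is flat.

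For the upper bound, I take a state-\(0\) candidate on each side, each of size \(\alpha\). Both roots are selected and every other vertex is dominated, so the union dominates \(\widehat H\) and \(\gamma(\widehat H)=2\alpha\). The only compatibility condition to note is that state \(2\) on one side needs the other root selected. This condition is irrelevant for the lower bound, since every state costs \(\alpha\).

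\textbf{Packing, lower bound.} I want a pair of states \((s_L,s_R)\) that are compatible and both achieve \(\beta\). Compatibility across the edge means \(d(x_L,P_L)+1+d(x_R,P_R)\geq3\), that is, \(s_L+s_R\geq2\) in truncated states. The pair \((1,1)\) is compatible, since \(1+1+1=3\), and gives \(2\beta\). So do \((0,2)\) and \((2,2)\). Packing conditions within each side are unaffected by the added edge: it can only lengthen no paths, and it cannot shorten any path between two vertices of the same side, because every such detour passes through the other copy and returns along the same edge. Hence \(\rho(\widehat H)\geq2\beta\).

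\textbf{Packing, upper bound.} Any packing \(P\) of \(\widehat H\) restricts to packings \(P_L,P_R\) of the two sides, and \(|P|\leq\rho_{s_L}+\rho_{s_R}\). Every state value is at most \(\beta\), so \(|P|\leq2\beta\) immediately, and the value \(\beta-1\) in state \(3\) is not even needed. I still have to check that restricting to a side gives a genuine packing of \(H_L\), meaning distances within \(H_L\) in \(\widehat H\) equal those in \(H_L\). This holds because \(x_Lx_R\) is a bridge.

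\textbf{Main obstacle.} The only real subtlety is being careful with the state definitions: \(\gamma_2\) candidates leave the root undominated, and infinite distances occur for empty packings. I expect no true obstacle; the lemma is essentially a bookkeeping consequence of the bridge structure. The hypothesis \(\rho_3=\beta-1\) is not used for the equalities. It presumably matters elsewhere in the paper, for the recurrence class.
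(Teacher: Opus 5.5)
Your proposal is correct and follows the paper's proof essentially step for step. Restricting to the two copies gives $|D_s|\geq\alpha$ and $|P_s|\leq\beta$, while the union of two state-$0$ domination candidates, and of two state-$1$ packings (compatible since $1+1+1=3$), attains the bounds; your remark that $\rho_3(H)=\beta-1$ is not needed here is also right, as the paper's argument likewise uses only that every packing state value is at most $\beta$.
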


\begin{proof}
Let \(H_L,H_R\) be the two copies used to form \(\widehat H\), with
roots \(x_L,x_R\), respectively. Let \(D\) be a dominating set of
\(\widehat H\), and put
\(D_s=D\cap V(H_s)\) for \(s\in\{L,R\}\). Since no nonroot vertex
of one copy has a neighbor in the other, \(D_s\) is an
\(x_s\)-boundary domination candidate. Hence
\(|D_s|\geq\alpha\) and \(|D|\geq2\alpha\). Conversely, the union of
state-\(0\) domination candidates of size \(\alpha\) in the two
copies dominates \(\widehat H\). Thus \(\gamma(\widehat H)=2\alpha\).

Similarly, if \(P\) is a packing in \(\widehat H\), then
\(P_s=P\cap V(H_s)\) is a packing in \(H_s\) and has one of the four
boundary states. Thus \(|P_s|\leq\beta\) and \(|P|\leq2\beta\).
Conversely, choose a state-\(1\) packing of size \(\beta\) in each
copy. Any selected vertices \(u\in V(H_L)\) and \(v\in V(H_R)\) satisfy
\[
 d_{\widehat H}(u,v)
 =d_{H_L}(u,x_L)+1+d_{H_R}(x_R,v)\geq3,
\]
so the union is a packing of size \(2\beta\). Therefore
\(\rho(\widehat H)=2\beta\).
\end{proof}


\section{\texorpdfstring{The binary branching subcubic family
\(\widehat B_t^\star\)}{The binary branching subcubic family B-hat-star}}
\label{sec:star-family}

We now construct a subcubic family with limiting ratio \(13/6\).

\subsection{\texorpdfstring{The connector \(A\)}{The connector A}}

Let \(A\) be the six-vertex unit displayed in
Figure~\ref{fig:graph-A}; explicitly, it is obtained from the cycle
\(C_5=a_1a_2a_3a_4a_5a_1\) by adding a new vertex \(a_6\) and the
edges \(a_3a_6,a_5a_6\). Set \(a=a_1\), take \(a\) as the root, and
use \((a_4,a_6)\), in this order, as the child ports. Thus \(A\) is a
rooted connector in the sense of
Definition~\ref{def:connector-composition}.

The connector \(A\) is obtained from either of the two order-eight
graphs \(H_1\) and \(H_2\) in Conjecture~\ref{conj:HLR} by deleting
the endpoints of a suitable edge. With the chosen root and child
ports, composing two copies through \(A\) preserves flat domination
profiles and packing profiles with a one-unit deficit in state \(3\).
Thus packing states \(2\) and \(3\) must remain separate in the
following transfer. The transfer uses only the explicit graph \(A\)
and its boundary-profile calculation.

\begin{lemma}
\label{lem:A-balancing}
Let \((H,x)\) be a rooted graph satisfying
\[
  \gamma_\bullet(H)=(\alpha,\alpha,\alpha),
  \qquad
  \rho_\bullet(H)=(\beta,\beta,\beta,\beta-1),
\]
where \(\alpha,\beta\geq0\).  Then
\[
  \gamma_\bullet(H\ostar_A H)
  =(2\alpha+2,2\alpha+2,2\alpha+2),
  \qquad
  \rho_\bullet(H\ostar_A H)
  =(2\beta+1,2\beta+1,2\beta+1,2\beta).
\]
\end{lemma}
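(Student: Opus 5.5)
The plan is to apply Lemma~\ref{lem:edge-attached-connector} with \(K=A\), root \(y=a=a_1\), and child ports \((z_1,z_2)=(a_4,a_6)\), taking \(H_1=H_2=H\). The whole computation then reduces to a few distance and domination facts inside the six-vertex graph \(A\), combined with the child profiles. I will first record the relevant distances in \(A\). The edges are \(a_1a_2,a_2a_3,a_3a_4,a_4a_5,a_5a_1,a_3a_6,a_5a_6\), so \(d_A(a_1,a_4)=d_A(a_1,a_6)=2\) (both via \(a_5\)). Moreover, \(A\) has diameter \(2\), which I will check pair by pair (for instance \(a_2a_4\) via \(a_3\), \(a_2a_6\) via \(a_3\), \(a_4a_6\) via \(a_5\)).

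For domination, the child costs are \(\gamma_{s_i}(H)=\alpha\) for every state \(s_i\). Hence \(|D|=|D_K|+2\alpha\) in the optimum, and it suffices to minimize \(|D_K|\) in each output state. Choosing \(s_1=s_2=0\) removes all port constraints, so \(D_K\) need only dominate \(a_2,a_3,a_5\), the vertices other than the root and the ports. Conversely, every admissible \(D_K\) must dominate \(a_2,a_3,a_5\). I would show \(|D_K|\geq2\) by observing that \(N_A[a_2]\cap N_A[a_5]=\{a_1\}\) and \(a_1\notin N_A[a_3]\). For the matching upper bounds, the witnesses are:
\begin{itemize}
\item state \(0\): \(\{a_1,a_3\}\);
\item state \(1\): \(\{a_2,a_5\}\);
\item state \(2\): \(\{a_3,a_4\}\), which avoids \(N_A[a_1]=\{a_1,a_2,a_5\}\).
\end{itemize}
By Lemma~\ref{lem:edge-attached-connector}(a), the output state is \(\min\{2,d_A(a_1,D_K)\}\), so this gives \((2\alpha+2,2\alpha+2,2\alpha+2)\).

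For packing, I will use part~(b). Since \(d_A(a_1,z_i)+1+s_i\geq3\) always holds, the output state is simply \(\min\{3,d_A(a_1,P_K)\}\). Because \(A\) has diameter \(2\), we have \(|P_K|\leq1\). In addition, \(\rho_{s}(H)\leq\beta\) for all \(s\), with \(\rho_3(H)=\beta-1\). This gives the upper bound \(2\beta+1\) in every state.

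In state \(3\), the bound drops to \(2\beta\). The reason is that a nonempty \(P_K\) would need \(d_A(a_1,P_K)\geq3\), which is impossible, so \(P_K=\varnothing\). Taking \(P_K=\varnothing\) with child states \(s_i=0\) attains \(2\beta\), and there is no compatibility constraint since \(d=\infty\).

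For states \(0,1,2\), I take \(P_K=\{v\}\) with \(v=a_1,a_2,a_3\) respectively, so that \(d_A(a_1,v)\) equals the target state. I give both children state \(2\), which has value \(\rho_2(H)=\beta\). Compatibility requires \(2+1+d_A(z_i,v)\geq3\), which holds trivially. This attains \(2\beta+1\), yielding \((2\beta+1,2\beta+1,2\beta+1,2\beta)\).

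The only step needing care is the lower bound \(|D_K|\geq2\) together with the diameter-\(2\) check. Both are finite verifications in \(A\), so I expect no real obstacle beyond bookkeeping of which port constraints are forced by child states.
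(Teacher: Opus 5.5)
Your proposal is correct and follows the paper's proof. It uses the same lower bounds: no single vertex of \(A\) dominates \(a_2,a_3,a_5\), diameter \(2\) forces \(|P_A|\leq1\), and output state \(3\) forces \(P_A=\varnothing\). These are combined with explicit attaining witnesses through Lemma~\ref{lem:edge-attached-connector}. Your witnesses differ from those in Table~\ref{tab:A-composition-attainment} only in the choice of child states: you use \((0,0)\) for domination and \((2,2)\) for packing. They are equally valid.
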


\begin{proof}
Apply Lemma~\ref{lem:edge-attached-connector} with connector \(A\).
For a boundary domination candidate \(D\), put
\(D_A=D\cap V(A)\). Each child restriction has size at least
\(\alpha\). The vertices \(a_2,a_3,a_5\) must be dominated by
\(D_A\), and no single vertex of \(A\) dominates all three. Hence
\(|D|\geq2\alpha+2\).

For a packing \(P\), put \(P_A=P\cap V(A)\). Each child restriction
has size at most \(\beta\). Since \(A\) has diameter \(2\),
\(|P_A|\leq1\), and hence \(|P|\leq2\beta+1\). If the output state is
\(3\), then \(P_A=\varnothing\), since every vertex of \(A\) is within
distance \(2\) of \(a\); in this case \(|P|\leq2\beta\). The
compatible rows of
Table~\ref{tab:A-composition-attainment}, together with extremal local
witnesses, attain the corresponding bounds in every output state.
\end{proof}

\begin{table}[tbp]
\centering
\caption{Attainment data for the composition through \(A\).}
\label{tab:A-composition-attainment}
{\renewcommand{\arraystretch}{1.25}
\setlength{\tabcolsep}{3pt}
\begin{tabularx}{\linewidth}{
  |>{\centering\arraybackslash}p{1.15cm}||
   >{\centering\arraybackslash}p{2.75cm}|
   >{\centering\arraybackslash}p{1.70cm}|
   >{\centering\arraybackslash}X||
   >{\centering\arraybackslash}p{2.75cm}|
   >{\centering\arraybackslash}p{1.70cm}|
   >{\centering\arraybackslash}X|
}
\hline
\textbf{State}&
\(\boldsymbol{\gamma}\) \textbf{ child states}&
\(\boldsymbol{D_A}\)&\(\boldsymbol{\gamma_i}\)&
\(\boldsymbol{\rho}\) \textbf{ child states}&
\(\boldsymbol{P_A}\)&\(\boldsymbol{\rho_i}\)\\
\hline\hline
0&\((2,0)\)&\(\{a,a_4\}\)&\(2\alpha+2\)&
  \((0,0)\)&\(\{a\}\)&\(2\beta+1\)\\ \hline
1&\((2,0)\)&\(\{a_2,a_4\}\)&\(2\alpha+2\)&
  \((0,0)\)&\(\{a_2\}\)&\(2\beta+1\)\\ \hline
2&\((2,0)\)&\(\{a_3,a_4\}\)&\(2\alpha+2\)&
  \((2,0)\)&\(\{a_4\}\)&\(2\beta+1\)\\ \hline
3&&&&\((0,0)\)&\(\varnothing\)&\(2\beta\)\\ \hline
\end{tabularx}}
\end{table}

\begin{definition}
\label{def:star-recursion}
For a rooted graph \((H,x)\), define \(\{H_t^\star\}_{t\geq0}\)
recursively by
\[
\begin{aligned}
  H_0^\star
  &\coloneqq
  ((H^{\Join})^\uparrow)^\uparrow\odot H^{\ltimes},\\
  H_{t+1}^\star
  &\coloneqq H_t^\star\ostar_A H_t^\star.
\end{aligned}
\]
The superscript \(\star\) labels the resulting family, while the
subscript in \(\ostar_A\) specifies the connector.
\end{definition}

\begin{proposition}
\label{prop:base-profile-transfer}
Let \((H,x)\) be a rooted graph of order \(n\) satisfying
\[
  \gamma_\bullet(H)=(\alpha,\alpha,\alpha),
  \qquad
\rho_\bullet(H)
  =(\beta+1,\beta,\beta,\beta).
\]
Then
\[
 |V(H_0^\star)|=3n+5,
\]
\[
 \gamma_\bullet(H_0^\star)
 =(3\alpha+2,3\alpha+2,3\alpha+2),
 \qquad
 \rho_\bullet(H_0^\star)
 =(3\beta+2,3\beta+2,3\beta+2,3\beta+1).
\]
\end{proposition}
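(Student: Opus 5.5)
The plan is to compute both rooted profiles of \(H_0^\star\) mechanically, in the order the operations appear in Definition~\ref{def:star-recursion}. We use three tools: Lemma~\ref{lem:symbolic-shape-transfers} for \(H^{\Join}\) and \(H^{\ltimes}\), Lemma~\ref{lem:pendant-root-rule} for the two pendant-root extensions, and the restriction--gluing rule for \(\odot\) established in the proof of Lemma~\ref{lem:symbolic-shape-transfers}. The order is routine: \(H^\uparrow\) has \(n+1\) vertices, so \(H^{\Join}\) has \(2n+1\) and \(((H^{\Join})^\uparrow)^\uparrow\) has \(2n+3\). Likewise \(H^\uparrow\odot K_2\) has \(n+2\) vertices, so \(H^{\ltimes}\) has \(n+3\). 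Identifying the two roots gives \(|V(H_0^\star)|=(2n+3)+(n+3)-1=3n+5\).

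\emph{Step 1: the left factor.} By \eqref{eq:symmetric-shape-transfer}, \(H^{\Join}\) has profiles \((2\alpha+1,2\alpha,2\alpha)\) and \((2\beta+1,2\beta+1,2\beta,2\beta)\). One application of Lemma~\ref{lem:pendant-root-rule} gives \((2\alpha+1,2\alpha+1,2\alpha)\) and \((2\beta+1,2\beta+1,2\beta+1,2\beta)\). A second application gives the flat domination profile \((2\alpha+1,2\alpha+1,2\alpha+1)\) and the packing profile \((2\beta+2,2\beta+1,2\beta+1,2\beta+1)\). The right factor \(H^{\ltimes}\) is read off from \eqref{eq:asymmetric-shape-transfer}: its profiles are \((\alpha+2,\alpha+1,\alpha+1)\) and \((\beta+1,\beta+1,\beta+1,\beta)\).

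\emph{Step 2: the join.} We reuse the state-pair analysis from the proof of Lemma~\ref{lem:symbolic-shape-transfers}. That analysis applies to any rooted join, because the two sides meet only at the common root and every path between them passes through it.

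For domination, output state \(0\) comes from \((0,0)\) with the shared root counted once. This gives \((2\alpha+1)+(\alpha+2)-1=3\alpha+2\). Output state \(1\) is the minimum over \((1,1),(1,2),(2,1)\), each of which gives \((2\alpha+1)+(\alpha+1)=3\alpha+2\). Output state \(2\) comes from \((2,2)\) and again gives \(3\alpha+2\).

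For packing, output state \(0\) comes from \((0,0)\) and gives \((2\beta+2)+(\beta+1)-1=3\beta+2\). For a positive output state \(k\), we maximize over pairs \((i,j)\) of positive states with \(i+j\ge3\) and \(\min\{i,j\}=k\).
\begin{itemize}
\item For \(k=1\), the best pairs are \((1,2)\) and \((1,3)\), each giving \((2\beta+1)+(\beta+1)=3\beta+2\); the pair \((2,1)\) gives the same.
\item For \(k=2\), the pairs \((2,2)\) and \((2,3)\) each give \(3\beta+2\).
\item For \(k=3\), only \((3,3)\) is available, giving \((2\beta+1)+\beta=3\beta+1\).
\end{itemize}
This produces exactly the profiles claimed in Proposition~\ref{prop:base-profile-transfer}.

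\emph{Main obstacle.} No step is conceptually hard. The only point needing care is to confirm that the join rule is exact rather than merely an upper bound or a lower bound, and this holds in both directions. Restriction shows that every global witness decomposes into a compatible state pair, which gives the lower bound for domination and the upper bound for packing. Gluing extremal local witnesses, whose values are finite in the states used, shows that each displayed value is attained. One must also note that infeasible states do not interfere. Here every relevant entry is finite, and in each output state the maximum or minimum is achieved by the pairs listed, so none of the optima is affected.
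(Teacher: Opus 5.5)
Your proof is correct and follows the paper's argument: you apply Lemma~\ref{lem:pendant-root-rule} twice to \(H^{\Join}\), take the profiles of \(H^{\ltimes}\) from Lemma~\ref{lem:symbolic-shape-transfers}, and then do the root-identification state matching, which you carry out in more detail than the paper does. One small slip: if the left factor \(((H^{\Join})^\uparrow)^\uparrow\) is listed first, as in your domination sums, the pairs \((1,3)\) and \((2,3)\) give \((2\beta+1)+\beta=3\beta+1\), not \(3\beta+2\); this does not affect the result, because \((1,2)\) and \((2,2)\) (and also \((2,1)\), \((3,1)\), \((3,2)\)) attain \(3\beta+2\).
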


\begin{table}[tbp]
\centering
\caption{Symbolic orders and profiles used to construct \(H_0^\star\).}
\label{tab:symbolic-base-profiles}
{\renewcommand{\arraystretch}{1.25}
\setlength{\tabcolsep}{3pt}
\begin{tabularx}{\linewidth}{|c||>{\centering\arraybackslash}p{1.5cm}|>{\centering\arraybackslash}X|>{\centering\arraybackslash}X|}
\hline
\textbf{Graph}&\(\boldsymbol{|V|}\)&\(\boldsymbol{\gamma_\bullet}\)&
\(\boldsymbol{\rho_\bullet}\)\\
\hline\hline
\(H\)&\(n\)&\((\alpha,\alpha,\alpha)\)&
\((\beta+1,\beta,\beta,\beta)\)\\ \hline
\(H^{\Join}\)&\(2n+1\)&\((2\alpha+1,2\alpha,2\alpha)\)&
\((2\beta+1,2\beta+1,2\beta,2\beta)\)\\ \hline
\(H^{\ltimes}\)&\(n+3\)&\((\alpha+2,\alpha+1,\alpha+1)\)&
\((\beta+1,\beta+1,\beta+1,\beta)\)\\ \hline
\(H_0^\star\)&\(3n+5\)&\((3\alpha+2,3\alpha+2,3\alpha+2)\)&
\((3\beta+2,3\beta+2,3\beta+2,3\beta+1)\)\\ \hline
\end{tabularx}}
\end{table}

\begin{proof}
The row for \(H\) records the hypothesis. The rows for
\(H^{\Join}\) and \(H^{\ltimes}\) in
Table~\ref{tab:symbolic-base-profiles} follow from
Lemma~\ref{lem:symbolic-shape-transfers}. Applying
Lemma~\ref{lem:pendant-root-rule} twice to \(H^{\Join}\) gives
\[
 \gamma_\bullet(((H^{\Join})^\uparrow)^\uparrow)
 =(2\alpha+1,2\alpha+1,2\alpha+1),
 \qquad
 \rho_\bullet(((H^{\Join})^\uparrow)^\uparrow)
 =(2\beta+2,2\beta+1,2\beta+1,2\beta+1).
\]
The root-identification state matching used in the proof of
Lemma~\ref{lem:symbolic-shape-transfers} then shows that
the graph
\[
 H_0^\star=((H^{\Join})^\uparrow)^\uparrow\odot H^{\ltimes}
\]
has order \(3n+5\) and profiles
\[
 \gamma_\bullet=(3\alpha+2,3\alpha+2,3\alpha+2),
 \qquad
 \rho_\bullet=(3\beta+2,3\beta+2,3\beta+2,3\beta+1).
\]
This gives the last row of the table.
\end{proof}

The seed \(H\) supplies the initial profile data, \(H_0^\star\) puts
them into the forms \((d,d,d)\) and \((p,p,p,p-1)\), and \(A\)
preserves these forms through the recursion. The choice \(H=B\) gives
the concrete family considered next.

\subsection{\texorpdfstring{The completed family \(\widehat B_t^\star\)}{The completed family B-hat-star}}

We now specialize Definition~\ref{def:star-recursion} to the rooted
seed \((B,b)\). Here \(n=9\), and Lemma~\ref{lem:profile-B} shows that
Proposition~\ref{prop:base-profile-transfer} applies with
\(\alpha=3\) and \(\beta=1\), giving
\begin{equation}
\label{eq:B-star-base-data}
 |V(B_0^\star)|=32,\qquad
 \gamma_\bullet(B_0^\star)=(11,11,11),\qquad
\rho_\bullet(B_0^\star)=(5,5,5,4).
\end{equation}

Using the notation in
Section~\ref{sec:closing-lemma}, write \(\widehat B_t^\star\) for the
closure of \(B_t^\star\).
Figure~\ref{fig:star-one-branching} shows the graph
\(\widehat B_1^\star\).

\begin{figure}
  \centering
  \resizebox{.9\textwidth}{!}{%
  \begin{tikzpicture}[
    cycle edge/.append style={line width=.44pt},
    internal edge/.append style={line width=.30pt},
    bridge edge/.append style={line width=.72pt},
    ordinary vertex/.append style={line width=.30pt},
    attachment vertex/.append style={line width=.38pt,double distance=.40pt},
    center vertex/.append style={line width=.38pt,double distance=.40pt},
    root vertex/.append style={line width=.46pt,double distance=.48pt},
    child port/.append style={line width=.46pt,double distance=.48pt},
    tthree child box/.style={
      draw=AuditBlue!40,dashed,rounded corners=5pt,line width=.30pt
    },
    tthree family box/.style={
      draw=AuditGold,dashed,rounded corners=7pt,line width=.32pt
    },
    tthree label/.style={
      font=\fontsize{4.25}{5.0}\selectfont
    },
    tthree family label/.style={
      font=\fontsize{5.2}{6.0}\selectfont
    },
    tthree attachment edge/.style={
      draw=AuditBrown,line width=.44pt
    },
    tthree closing edge/.style={draw=AuditRed,line width=.68pt},
    pics/tthreeBltimes/.style={
      code={
        \coordinate (-root) at (0,0);
        \coordinate (-center) at (0,-1.20);
        \coordinate (-leaf) at (.764,-1.20);
        \pic[scale=.83,transform shape]
          (Bcore) at (-1.636,-1.20) {gadgetB};
        \coordinate (-attachment) at (Bcore-root);
        \draw[bridge edge]
          (-attachment)--(-center)--(-leaf) (-center)--(-root);
        \node[attachment vertex] at (-attachment) {};
        \node[center vertex] at (-center) {};
        \node[attachment vertex] at (-leaf) {};
      }
    },
    pics/tthreeBzero/.style={
      code={
        \coordinate (-root) at (0,0);
        \coordinate (-oldroot) at (0,-.80);
        \pic[scale=.32,yscale=-1,transform shape]
          (J) at (0,-1.2544) {blockBJoin};
        \draw[bridge edge]
          (-root)--(-oldroot)--(J-root);
        \pic[scale=.385,rotate=90,transform shape]
          (L) at (-root) {tthreeBltimes};
        \node[center vertex] at (-root) {};
        \node[ordinary vertex] at (-oldroot) {};
        \node[root vertex,scale=.32,transform shape] at (J-root) {};
      }
    },
    pics/tthreeA/.style={
      code={
        \coordinate (-a1) at (0,1.15);
        \coordinate (-a2) at (-1,.35);
        \coordinate (-a3) at (-.63,-.81);
        \coordinate (-a4) at (.63,-.81);
        \coordinate (-a5) at (1,.35);
        \coordinate (-a6) at (0,.05);
        \draw[cycle edge]
          (-a1)--(-a2)--(-a3)--(-a4)--(-a5)--cycle;
        \draw[internal edge] (-a3)--(-a6)--(-a5);
        \foreach \i in {2,3,5}
          \node[ordinary vertex] at (-a\i) {};
        \node[root vertex] at (-a1) {};
        \foreach \i in {4,6}
          \node[child port] at (-a\i) {};
        \coordinate (-root) at (-a1);
        \coordinate (-portone) at (-a4);
        \coordinate (-porttwo) at (-a6);
      }
    },
    pics/tthreeBone/.style={
      code={
        \pic[scale=.8536585366,rotate=-90,transform shape] (A) {tthreeA};
        \coordinate (-lowerroot) at (-1.72,-1.3487805);
        \coordinate (-upperroot) at (-1.72, 1.3487805);
        \pic[scale=.8536585366,transform shape]
          (Clo) at (-lowerroot) {tthreeBzero};
        \pic[scale=.8536585366,yscale=-1,transform shape]
          (Cup) at (-upperroot) {tthreeBzero};
        \begin{scope}[on background layer]
          \draw[tthree attachment edge]
            (A-portone) .. controls (-1.22,-.54) and (-1.72,-.92)
            .. (-lowerroot);
          \draw[tthree attachment edge]
            (A-porttwo) .. controls (-.55,.08) and (-1.72,.70)
            .. (-upperroot);
        \end{scope}
        \coordinate (-root) at (A-root);
        \foreach \v in {A-portone,A-porttwo}
          \node[child port,scale=.8536585366,transform shape] at (\v) {};
        \foreach \v in {lowerroot,upperroot}
          \node[center vertex,scale=.8536585366,transform shape] at (-\v) {};
        \node[root vertex,scale=.8536585366,transform shape] at (-root) {};
      }
    }
  ]
    \begin{scope}[on background layer]
      \draw[tthree family box]
        (-4.42,-2.82) rectangle (-.28,2.82);
      \draw[tthree family box]
        (.28,-2.82) rectangle (4.42,2.82);
    \end{scope}
    \draw[tthree child box]
      (-4.25,.58) rectangle (-1.75,2.38);
    \draw[tthree child box]
      (-4.25,-2.38) rectangle (-1.75,-.58);
    \draw[tthree child box]
      (1.75,.58) rectangle (4.25,2.38);
    \draw[tthree child box]
      (1.75,-2.38) rectangle (4.25,-.58);

    \pic[scale=.82,transform shape]
      (BL) at (-1.35,0) {tthreeBone};
    \pic[scale=.82,xscale=-1,transform shape]
      (BR) at (1.35,0) {tthreeBone};

    \draw[tthree closing edge] (BL-root)--(BR-root);
    \foreach \v in {BL-root,BR-root}
      \node[root vertex,scale=.70,transform shape] at (\v) {};

    \node[tthree label] at (-1.35,.88) {\(A\)};
    \node[tthree label] at (1.35,.88) {\(A\)};
    \node[tthree label] at (-3.00,2.57) {\(B_0^\star\)};
    \node[tthree label] at (-3.00,-2.57) {\(B_0^\star\)};
    \node[tthree label] at (3.00,2.57) {\(B_0^\star\)};
    \node[tthree label] at (3.00,-2.57) {\(B_0^\star\)};
    \node[tthree family label] at (-2.45,3.08) {\(B_1^\star\)};
    \node[tthree family label] at (2.45,3.08) {\(B_1^\star\)};
  \end{tikzpicture}%
  }
  \caption{The graph \(\widehat B_1^\star\). Each blue dashed box
  isolates a rooted copy of \(B_0^\star\), and each gold dashed box
  isolates a rooted copy of \(B_1^\star\).}
  \label{fig:star-one-branching}
\end{figure}

\begin{lemma}
\label{lem:branching-structure}
For every \(t\geq0\), the graph \(B_t^\star\) is connected and
subcubic, and its root has degree \(2\). Consequently,
\(\widehat B_t^\star\) is connected, subcubic, and noncubic.
\end{lemma}

\begin{proof}
The definition of \(B_0^\star\) shows that it is connected and
subcubic and that its root has degree \(2\). Suppose the assertion
holds for \(B_t^\star\). Then
\(B_{t+1}^\star=B_t^\star\ostar_A B_t^\star\) is connected, and its
two attachment edges raise the child roots and the ports
\(a_4,a_6\) from degree \(2\) to \(3\). The new root \(a\) has degree
\(2\), proving the first assertion by induction.

The closing edge raises both roots to degree \(3\) and leaves all other
degrees unchanged, so \(\widehat B_t^\star\) is connected and
subcubic. For \(t=0\), the
root introduced by the first pendant-root extension is a nonroot
degree-two vertex; for \(t\geq1\), so is \(a_2\) in the outermost
\(A\). Thus \(\widehat B_t^\star\) is noncubic.
\end{proof}

\begin{theorem}
\label{thm:star-parameters}
For every \(t\geq0\), the graph \(\widehat B_t^\star\) is connected,
subcubic, and noncubic, and satisfies
\[
 |V(\widehat B_t^\star)|=76\cdot2^t-12,
 \qquad
 \gamma(\widehat B_t^\star)=26\cdot2^t-4,
 \qquad
 \rho(\widehat B_t^\star)=12\cdot2^t-2.
\]
Consequently,
\[
 \gamma(\widehat B_t^\star)
 -2\rho(\widehat B_t^\star)=2^{t+1},
\]
and
\[
 \frac{\gamma(\widehat B_t^\star)}{\rho(\widehat B_t^\star)}
 =\frac{13}{6}+\frac{1}{36\cdot2^t-6}
 \,\xrightarrow[t\to\infty]{}\,\frac{13}{6}.
\]
\end{theorem}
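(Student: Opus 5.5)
The plan is an induction on \(t\) that tracks the order and the rooted profiles of \(B_t^\star\), followed by a single application of the closing lemma. Structural properties come for free: by Lemma~\ref{lem:branching-structure}, \(\widehat B_t^\star\) is connected, subcubic and noncubic. So all the work is numerical.

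First, the invariant. I would prove by induction that
\[
 |V(B_t^\star)|=38\cdot2^t-6,\qquad
 \gamma_\bullet(B_t^\star)=(\alpha_t,\alpha_t,\alpha_t),\qquad
 \rho_\bullet(B_t^\star)=(\beta_t,\beta_t,\beta_t,\beta_t-1),
\]
with \(\alpha_t=13\cdot2^t-2\) and \(\beta_t=6\cdot2^t-1\).
\begin{itemize}
\item The base case \(t=0\) is exactly \eqref{eq:B-star-base-data}: order \(32\), \(\alpha_0=11\), \(\beta_0=5\). This comes from Lemma~\ref{lem:profile-B} and Proposition~\ref{prop:base-profile-transfer} with \(n=9\), \(\alpha=3\), \(\beta=1\).
\item For the inductive step, \(B_{t+1}^\star=B_t^\star\ostar_A B_t^\star\) consists of two disjoint copies of \(B_t^\star\) plus the six vertices of \(A\). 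Hence \(|V(B_{t+1}^\star)|=2|V(B_t^\star)|+6\).
\item The profile hypothesis of Lemma~\ref{lem:A-balancing} is precisely the invariant. That lemma gives the same profile shapes with \(\alpha_{t+1}=2\alpha_t+2\) and \(\beta_{t+1}=2\beta_t+1\).
\item Solving the three linear recurrences from the initial values \(32,11,5\) gives the displayed closed forms. For example, \(\alpha_t+2\) doubles each step starting from \(13\), and \(\beta_t+1\) doubles starting from \(6\).
\end{itemize}

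Second, the closure. The rooted profile of \(B_t^\star\) has exactly the form required by Lemma~\ref{lem:profile-closure}. Applying it yields \(\gamma(\widehat B_t^\star)=2\alpha_t=26\cdot2^t-4\) and \(\rho(\widehat B_t^\star)=2\beta_t=12\cdot2^t-2\). Since \(\widehat B_t^\star\) consists of two copies of \(B_t^\star\), its order is \(2|V(B_t^\star)|=76\cdot2^t-12\).

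Third, the consequences are arithmetic:
\[
 \gamma-2\rho=(26-24)2^t=2^{t+1},
\]
and
\[
 \frac{26\cdot2^t-4}{12\cdot2^t-2}-\frac{13}{6}
 =\frac{6(26\cdot2^t-4)-13(12\cdot2^t-2)}{6(12\cdot2^t-2)}
 =\frac{2}{72\cdot2^t-12}
 =\frac1{36\cdot2^t-6},
\]
which tends to \(0\) as \(t\to\infty\).

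There is no genuine obstacle here, since the substantive content sits in Lemma~\ref{lem:A-balancing} and Proposition~\ref{prop:base-profile-transfer}. The one point to check carefully is that the invariant is stated exactly in the form consumed by both Lemma~\ref{lem:A-balancing} and Lemma~\ref{lem:profile-closure}, namely a flat domination profile together with a one-unit deficit in packing state \(3\).
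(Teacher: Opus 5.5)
Your proposal is correct and is essentially the paper's proof: it uses the base data \eqref{eq:B-star-base-data}, the recurrences \(d_{t+1}=2d_t+2\), \(p_{t+1}=2p_t+1\), \(v_{t+1}=2v_t+6\) from Lemma~\ref{lem:A-balancing}, and a final application of Lemma~\ref{lem:profile-closure}, exactly as the paper does. The only difference is that the paper first solves these recurrences for an arbitrary seed with parameters \(n,\alpha,\beta\), isolating the limit \((3\alpha+4)/(3\beta+3)\), and then substitutes \(n=9\), \(\alpha=3\), \(\beta=1\), whereas you specialize to \(B\) from the start.
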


\begin{proof}
The structural assertions follow from
Lemma~\ref{lem:branching-structure}.
We first carry out the calculation for an arbitrary rooted seed
\((H,x)\) of order \(n\) satisfying the hypotheses of
Proposition~\ref{prop:base-profile-transfer}, with profile parameters
\(\alpha\) and \(\beta\). By Proposition~\ref{prop:base-profile-transfer} and
Lemma~\ref{lem:A-balancing}, there are sequences \((d_t)\) and
\((p_t)\) such that
\[
 \gamma_\bullet(H_t^\star)=(d_t,d_t,d_t),
 \qquad
 \rho_\bullet(H_t^\star)=(p_t,p_t,p_t,p_t-1).
\]
Set \(v_t=|V(H_t^\star)|\). Then
\[
\begin{alignedat}{3}
 d_0&=3\alpha+2,\qquad&
 p_0&=3\beta+2,\qquad&
 v_0&=3n+5,\\
 d_{t+1}&=2d_t+2,\qquad&
 p_{t+1}&=2p_t+1,\qquad&
 v_{t+1}&=2v_t+6.
\end{alignedat}
\]
Hence
\[
\begin{alignedat}{3}
 d_t&=(3\alpha+4)2^t-2,\qquad&
 p_t&=(3\beta+3)2^t-1,\qquad&
 v_t&=(3n+11)2^t-6.
\end{alignedat}
\]
Write \(\widehat H_t^\star\) for the closure of \(H_t^\star\).
The closing construction and Lemma~\ref{lem:profile-closure} give
\[
 |V(\widehat H_t^\star)|=2v_t,
 \qquad
 \gamma(\widehat H_t^\star)=2d_t,
 \qquad
 \rho(\widehat H_t^\star)=2p_t.
\]
Consequently,
\[
 \frac{\gamma(\widehat H_t^\star)}
      {\rho(\widehat H_t^\star)}
 \,\xrightarrow[t\to\infty]{}\,
 \Lambda(H)\coloneqq\frac{3\alpha+4}{3\beta+3}.
\]
This calculation uses only the profile hypotheses. If
\(H\) is also connected and subcubic with \(\deg_H(x)\leq2\), the
argument of Lemma~\ref{lem:branching-structure} shows that its
closures are connected, noncubic, and subcubic.

For \(H=B\), we have \(n=9\), while Lemma~\ref{lem:profile-B} gives
\(\alpha=3\) and \(\beta=1\). Substitution gives the three parameter
formulas; the gap identity and ratio formula follow by direct
simplification.
\end{proof}

For any seed satisfying the profile and structural conditions in the
proof of Theorem~\ref{thm:star-parameters}, integrality of \(\alpha\)
and \(\beta\) shows that it improves the limit attained by \(B\)
precisely when
\[
 \Lambda(H)>\Lambda(B)=\frac{13}{6}
 \quad\Longleftrightarrow\quad
 6\alpha-13\beta>5
 \quad\Longleftrightarrow\quad
 6\alpha-13\beta\geq6.
\]
This is a search criterion within the present mechanism; it neither
asserts that such a seed exists nor that \(B\) is globally optimal.

\section{\texorpdfstring{The binary branching cubic family
\(\widehat B_t^\bullet\)}{The binary branching cubic family B-hat-bullet}}
\label{sec:bullet-family}

We now construct a cubic family with limiting ratio \(17/8\), using
the common composition from
Definition~\ref{def:connector-composition} with two new connectors.

\subsection{\texorpdfstring{The connectors \(S\) and \(R\)}{The connectors S and R}}

\begin{definition}
Let the \emph{seed connector} be \(S=B-b_4b_7\), rooted at \(b\) with
ordered child ports \((b_4,b_7)\). The \emph{recursive connector}
\(R\) is obtained from \(A\) by subdividing \(a_3a_4\) with a new
vertex \(a_7\) and adding the edge \(a_2a_7\); take \(a\) as its root
and \((a_4,a_6)\) as its ordered child ports.
\end{definition}

In both \(S\) and \(R\), the root and two child ports have degree two,
and every other vertex has degree three; see
Figure~\ref{fig:bullet-connectors}.

\begin{figure}
\centering
\begin{minipage}[t]{.44\textwidth}\centering
\begin{tikzpicture}[
  scale=.82,
  cycle edge/.append style={line width=.54pt},
  internal edge/.append style={line width=.38pt},
  connector hollow/.style={
    circle,draw=AuditBlue,fill=white,
    minimum size=3.2pt,inner sep=0pt,line width=.36pt
  },
  root vertex/.append style={
    minimum size=5.6pt,line width=.58pt,double distance=.60pt
  },
  child port/.append style={
    minimum size=5.6pt,line width=.58pt,double distance=.60pt
  }
]
\foreach \i/\a in {1/90,2/130,3/170,4/-150,5/-110,6/-70,7/-30,8/10,9/50}
  \coordinate(cb\i) at (\a:1.18);
\draw[cycle edge]
  (cb1)--(cb2)--(cb3)--(cb4)--(cb5)--(cb6)--(cb7)--(cb8)--(cb9)--cycle;
\foreach \a/\b in {2/6,3/8,5/9}
  \draw[internal edge] (cb\a)--(cb\b);
\foreach \i in {2,3,5,6,8,9}
  \node[connector hollow] at (cb\i) {};
\node[root vertex] at (cb1) {};
\foreach \i in {4,7}
  \node[child port] at (cb\i) {};
\node[font=\scriptsize,above=2pt] at (cb1) {\(b_1=b\)};
\node[font=\scriptsize,above left=1pt and 1pt] at (cb2) {\(b_2\)};
\node[font=\scriptsize,left=2pt] at (cb3) {\(b_3\)};
\node[font=\scriptsize,below left=1pt and 1pt] at (cb4) {\(b_4\)};
\node[font=\scriptsize,below left=1pt and 1pt] at (cb5) {\(b_5\)};
\node[font=\scriptsize,below right=1pt and 1pt] at (cb6) {\(b_6\)};
\node[font=\scriptsize,below right=1pt and 1pt] at (cb7) {\(b_7\)};
\node[font=\scriptsize,right=2pt] at (cb8) {\(b_8\)};
\node[font=\scriptsize,above right=1pt and 1pt] at (cb9) {\(b_9\)};
\end{tikzpicture}
\par\smallskip
\(S\)
\end{minipage}\hfill
\begin{minipage}[t]{.44\textwidth}\centering
\begin{tikzpicture}[
  scale=.95,
  cycle edge/.append style={line width=.54pt},
  internal edge/.append style={line width=.38pt},
  ordinary vertex/.append style={minimum size=3.2pt,line width=.38pt},
  root vertex/.append style={
    minimum size=5.6pt,line width=.58pt,double distance=.60pt
  },
  child port/.append style={
    minimum size=5.6pt,line width=.58pt,double distance=.60pt
  }
]
\coordinate (ra1) at (0,1.15);
\coordinate (ra2) at (-1.00,.35);
\coordinate (ra3) at (-.63,-.81);
\coordinate (ra4) at ( .63,-.81);
\coordinate (ra5) at (1.00,.35);
\coordinate (ra6) at (0,.05);
\coordinate (ra7) at ($(ra3)!.50!(ra4)$);
\draw[cycle edge]
  (ra1)--(ra2)--(ra3)--(ra7)--(ra4)--(ra5)--cycle;
\draw[internal edge](ra3)--(ra6)--(ra5);
\draw[internal edge](ra2)--(ra7);
\foreach \i in {2,3,5,7}\node[ordinary vertex]at(ra\i){};
\node[root vertex]at(ra1){};
\foreach \i in {4,6}\node[child port]at(ra\i){};
\node[font=\scriptsize,above=2pt]at(ra1){\(a_1=a\)};
\node[font=\scriptsize,left=2pt]at(ra2){\(a_2\)};
\node[font=\scriptsize,below left=1pt]at(ra3){\(a_3\)};
\node[font=\scriptsize,below=2pt]at(ra7){\(a_7\)};
\node[font=\scriptsize,below right=1pt]at(ra4){\(a_4\)};
\node[font=\scriptsize,right=2pt]at(ra5){\(a_5\)};
\node[font=\scriptsize,left=2pt]at(ra6){\(a_6\)};
\end{tikzpicture}
\par\smallskip
\(R\)
\end{minipage}
\caption{The seed connector \(S\) and the recursive connector \(R\).
Roots and child ports are highlighted.}
\label{fig:bullet-connectors}
\end{figure}

\begin{definition}
\label{def:connector-recursion}
For a rooted graph \((H,x)\), define \(\{H_t^\bullet\}_{t\geq0}\)
recursively by
\[
\begin{aligned}
 H_0^\bullet
 &\coloneqq H^{\Join}\ostar_S H^{\Join},\\
 H_{t+1}^\bullet
 &\coloneqq H_t^\bullet\ostar_R H_t^\bullet.
\end{aligned}
\]
\end{definition}

The next two lemmas show that \(S\) initializes the recursive profile
class and \(R\) preserves it.

\begin{lemma}\label{lem:connector-base-profile}
Let \((H,x)\) be a rooted graph of order \(n\) satisfying
\[
 \gamma_\bullet(H)=(\alpha,\alpha,\alpha),\qquad
 \rho_\bullet(H)=(\beta+1,\beta,\beta,\beta).
\]
Then
\[
 |V(H_0^\bullet)|=4n+11,
\]
\[
 \gamma_\bullet(H_0^\bullet)
 =(4\alpha+3,4\alpha+3,4\alpha+3),
 \qquad
 \rho_\bullet(H_0^\bullet)
 =(4\beta+3,4\beta+3,4\beta+3,4\beta+2).
\]
\end{lemma}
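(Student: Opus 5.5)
The plan is to apply Lemma~\ref{lem:edge-attached-connector} with connector \(S\), both children equal to \(H^{\Join}\), and to compare with the data in Lemma~\ref{lem:symbolic-shape-transfers}. By that lemma,
\[
\gamma_\bullet(H^{\Join})=(2\alpha+1,2\alpha,2\alpha),\qquad
\rho_\bullet(H^{\Join})=(2\beta+1,2\beta+1,2\beta,2\beta).
\]
Each child has order \(2n+1\), and \(S\) has \(9\) vertices, so the order is \(2(2n+1)+9=4n+11\).

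In \(S\) the distances from \(b\) are as follows:
\begin{itemize}
\item \(b_2,b_9\) are at distance \(1\);
\item \(b_3,b_5,b_6,b_8\) are at distance \(2\);
\item the ports \(b_4,b_7\) are at distance \(3\).
\end{itemize}
Removing \(b_4b_7\) makes the ports far apart, since \(d_S(b_4,b_7)=3\).

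\emph{Domination.} A child in state \(0\) costs \(2\alpha+1\), and a child in state \(1\) or \(2\) costs \(2\alpha\). Hence \(|D|\ge 4\alpha+|D_S|+(\text{number of children in state }0)\), and it suffices to show \(|D_S|+\#\{\text{children in state }0\}\ge3\).

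If no child is in state \(0\), then Lemma~\ref{lem:edge-attached-connector}(a) forces \(D_S\) to dominate every vertex of \(S\) other than \(b\). Since \(S\) is a spanning subgraph of \(B\), such a set is also a \(b\)-boundary domination candidate in \(B\). Lemma~\ref{lem:profile-B} then gives \(|D_S|\ge3\).

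If some child is in state \(0\), it suffices that \(|D_S|\ge2\). This holds because \(D_S\) must dominate the six vertices \(b_2,b_3,b_5,b_6,b_8,b_9\), and one closed neighbourhood in \(S\) has at most \(4\) vertices.

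For attainment with all children in states \(1\) or \(2\), I will check directly in \(S\) that the following sets dominate \(V(S)\setminus\{b\}\):
\begin{itemize}
\item \(\{b,b_5,b_8\}\) gives output state \(0\);
\item \(\{b_2,b_5,b_7\}\) gives output state \(1\);
\item \(\{b_3,b_5,b_7\}\) gives output state \(2\).
\end{itemize}
Each set has size \(3\) and dominates both ports, so each yields \(4\alpha+3\).

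\emph{Packing.} A child in state \(0\) or \(1\) contributes at most \(2\beta+1\), and a child in state \(2\) or \(3\) contributes at most \(2\beta\). So I need \(|P_S|+\#\{\text{children in state }0\text{ or }1\}\le3\), with right-hand side \(2\) in output state \(3\).

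The key combinatorial step, which I expect to be the main obstacle, is a small BFS check on the \(9\)-vertex graph \(S\). The claim to verify is that every pair at distance at least \(3\) in \(S\) contains a port: the only such pairs should be \(\{b,b_4\}\), \(\{b,b_7\}\) and \(\{b_4,b_7\}\). It follows that the only packing of size \(3\) is \(\{b,b_4,b_7\}\), and that every packing of size \(2\) contains a port. A child whose port lies in \(P_S\) must be in state \(\ge2\) by the compatibility condition \(s_i+1+d_S(z_i,P_S)\ge3\). This yields the bound in each case:
\begin{itemize}
\item if \(|P_S|=3\), both children are in states \(\ge2\), giving \(3+4\beta\);
\item if \(|P_S|=2\), at most one child is in a low state, giving \(2+(2\beta+1)+2\beta\);
\item if \(|P_S|\le1\), the bound \(1+2(2\beta+1)\) is trivial.
\end{itemize}

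For output state \(3\), every vertex of \(P_S\) must be at distance \(3\) from \(b\), so \(P_S\subseteq\{b_4,b_7\}\). Children never affect the state, since \(d_S(b,z_i)+1\ge4\).
\begin{itemize}
\item If \(P_S=\{b_4,b_7\}\), both children are in high states, giving \(4\beta+2\).
\item If \(P_S\) is a single port, that child is in a high state, giving \(1+2\beta+(2\beta+1)\).
\item If \(P_S=\varnothing\), the bound is \(2(2\beta+1)\).
\end{itemize}
In every case the total is at most \(4\beta+2\), and \(P_S=\varnothing\) with low-state children attains it.

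Attainment in states \(0,1,2\) is as follows:
\begin{itemize}
\item \(P_S=\{b,b_4,b_7\}\) with both children in state \(2\) gives output state \(0\) and value \(4\beta+3\);
\item \(P_S=\{b_2\}\), where \(d_S(b_2,b_4)=d_S(b_2,b_7)=2\), allows children in states \((0,0)\), giving output state \(1\) and value \(4\beta+3\);
\item \(P_S=\{b_3\}\), where \(d_S(b_3,b_4)=1\) and \(d_S(b_3,b_7)=2\), allows child states \((2,0)\) or \((1,0)\), giving output state \(2\) and value \(4\beta+3\).
\end{itemize}
I will record these witnesses in a table analogous to Table~\ref{tab:A-composition-attainment}.
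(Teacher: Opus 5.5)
Your proposal is correct and follows the paper's proof. Both arguments apply Lemma~\ref{lem:edge-attached-connector} with connector \(S\) and children \(H^{\Join}\), and both rest on the fact that the only pairs at distance at least \(3\) in \(S\) lie in \(\{b,b_4,b_7\}\), so a packing meeting \(S\) in two or more vertices selects a port and pushes that child into a high state. The one variation is in the domination lower bound: when no child is in state \(0\), you note that \(D_S\) is a \(b\)-boundary domination candidate of \(B\) and quote Lemma~\ref{lem:profile-B}, whereas the paper lists the three pairs \(\{b_2,b_9\},\{b_3,b_5\},\{b_6,b_8\}\) that dominate the six internal vertices; your version is a tidy shortcut that reuses earlier work.

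One small slip: with \(P_S=\{b_3\}\), the child states \((2,0)\) give only \(4\beta+2\). The value \(4\beta+3\) comes from \((1,0)\), or from \((1,1)\) as in the paper's table.
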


\begin{proof}
The child profiles are given by
\eqref{eq:symmetric-shape-transfer}. Also
\(|V(H^{\Join})|=2n+1\), so the definition gives
\[
|V(H_0^\bullet)|=9+2(2n+1)=4n+11.
\]

Apply Lemma~\ref{lem:edge-attached-connector} with connector \(S\).
For a boundary domination candidate \(D\), put
\(D_S=D\cap V(S)\). Each copy of \(H^{\Join}\) contributes at least
\(2\alpha\), with one additional vertex in state \(0\). Thus
\(|D|\) is at least \(4\alpha+|D_S|\) plus the number of copies in
state \(0\). The six internal vertices
\(b_2,b_3,b_5,b_6,b_8,b_9\) must be dominated by \(D_S\), so
\(|D_S|\geq2\). If \(|D_S|=2\), the only pairs that dominate all six are
\[
 \{b_2,b_9\},\qquad \{b_3,b_5\},\qquad \{b_6,b_8\}.
\]
The first pair forces both copies into state \(0\), and each of the
other two pairs forces at least one copy into state \(0\). Hence
\(|D|\geq4\alpha+3\).

For a packing \(P\), put \(P_S=P\cap V(S)\). Each copy of
\(H^{\Join}\) contributes at most \(2\beta\), with one additional
vertex in state \(0\) or \(1\). If \(|P_S|\leq1\), these additional
vertices give \(|P|\leq4\beta+3\). If \(|P_S|\geq2\), then
\(P_S\subseteq\{b,b_4,b_7\}\) and at least \(|P_S|-1\) child ports are
selected. Each selected port excludes states \(0\) and \(1\) in the
corresponding copy, so again \(|P|\leq4\beta+3\). In output state
\(3\), \(P_S\subseteq\{b_4,b_7\}\), and the same argument gives
\(|P|\leq4\beta+2\).
The witnesses in Table~\ref{tab:S-base-attainment} attain all these
bounds.
\end{proof}

\begin{table}[tbp]
\centering
\caption{Attainment data for the composition through \(S\).}
\label{tab:S-base-attainment}
{\renewcommand{\arraystretch}{1.25}
\setlength{\tabcolsep}{3pt}
\begin{tabularx}{\linewidth}{
  |>{\centering\arraybackslash}p{1.15cm}||
   >{\centering\arraybackslash}p{2.75cm}|
   >{\centering\arraybackslash}p{2.00cm}|
   >{\centering\arraybackslash}X||
   >{\centering\arraybackslash}p{2.75cm}|
   >{\centering\arraybackslash}p{2.00cm}|
   >{\centering\arraybackslash}X|
}
\hline
\textbf{State}&
\(\boldsymbol{\gamma}\) \textbf{ child states}&
\(\boldsymbol{D_S}\)&\(\boldsymbol{\gamma_i}\)&
\(\boldsymbol{\rho}\) \textbf{ child states}&
\(\boldsymbol{P_S}\)&\(\boldsymbol{\rho_i}\)\\
\hline\hline
0&\((1,1)\)&\(\{b,b_4,b_7\}\)&\(4\alpha+3\)&
  \((0,0)\)&\(\{b\}\)&\(4\beta+3\)\\ \hline
1&\((1,1)\)&\(\{b_2,b_4,b_8\}\)&\(4\alpha+3\)&
  \((1,1)\)&\(\{b_2\}\)&\(4\beta+3\)\\ \hline
2&\((0,1)\)&\(\{b_6,b_8\}\)&\(4\alpha+3\)&
  \((1,1)\)&\(\{b_3\}\)&\(4\beta+3\)\\ \hline
3&&&&\((0,0)\)&\(\varnothing\)&\(4\beta+2\)\\ \hline
\end{tabularx}}
\end{table}

\begin{lemma}
\label{lem:connector-recursive-transfer}
If a rooted graph \((H,x)\) has
\[
 \gamma_\bullet(H)=(\alpha,\alpha,\alpha),\qquad
 \rho_\bullet(H)=(\beta,\beta,\beta,\beta-1),
\]
then
\[
 \gamma_\bullet(H\ostar_R H)=(2\alpha+2,2\alpha+2,2\alpha+2),
 \qquad
 \rho_\bullet(H\ostar_R H)
 =(2\beta+1,2\beta+1,2\beta+1,2\beta).
\]
\end{lemma}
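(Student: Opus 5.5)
The plan is to mirror the proof of Lemma~\ref{lem:A-balancing}. I would apply Lemma~\ref{lem:edge-attached-connector} with connector \(R\), root \(a=a_1\) and child ports \((a_4,a_6)\). From the definition, \(R\) has the seven vertices \(a_1,\dots,a_7\) and the edges
\[
a_1a_2,\ a_2a_3,\ a_3a_7,\ a_7a_4,\ a_4a_5,\ a_5a_1,\ a_3a_6,\ a_5a_6,\ a_2a_7 .
\]
Since the child profiles are flat (\(\gamma_i(H)=\alpha\) for all \(i\)), each child contributes at least \(\alpha\) to a domination candidate, whatever its state. The whole lower bound therefore reduces to bounding \(|D_R|\) from below. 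For packing, each child contributes at most \(\beta\), and only state \(3\) is worse.

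\textbf{Domination lower bound.} By Lemma~\ref{lem:edge-attached-connector}(a), the four internal vertices \(a_2,a_3,a_5,a_7\) must be dominated by \(D_R\). I would check directly that no single vertex of \(R\) dominates all four. Indeed, \(a_5\) is dominated only by \(a_1,a_4,a_5,a_6\). Of these, \(a_1\) misses \(a_3\), \(a_4\) misses \(a_2\), \(a_5\) misses \(a_2\), and \(a_6\) misses \(a_2\). Hence \(|D_R|\ge2\) and \(|D|\ge2\alpha+2\) in every output state.

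\textbf{Domination attainment.} I would give explicit witnesses, one per output state.
\begin{itemize}
\item State \(0\): take \(D_R=\{a,a_3\}\). This dominates \(a_2,a_5,a_7,a_6\) but not \(a_4\), so the child at \(a_4\) is placed in state \(0\). This costs nothing extra, since the profile is flat.
\item State \(1\): take \(D_R=\{a_2,a_5\}\). This dominates all of \(R\), so both children may be taken in any state.
\item State \(2\): take \(D_R=\{a_3,a_4\}\). Here \(d_R(a,\{a_3,a_4\})=2\), the internal vertices are dominated, \(a_6\) is dominated by \(a_3\), and \(a_4\in D_R\).
\end{itemize}
Each witness gives total \(2\alpha+2\).

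\textbf{Packing.} First I would verify that \(R\) has diameter \(2\) by a short check of all nonadjacent pairs (for instance \(a_2a_4\) via \(a_7\), \(a_3a_5\) via \(a_6\), \(a_7a_6\) via \(a_3\)). This gives \(|P_R|\le1\) and \(|P|\le2\beta+1\). Every vertex lies within distance \(2\) of \(a\), so output state \(3\) forces \(P_R=\varnothing\) and \(|P|\le2\beta\).

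For attainment, note that \(d_R(a,a_4)=d_R(a,a_6)=2\). Using the compatibility and output formulas of Lemma~\ref{lem:edge-attached-connector}(b), I would use the following witnesses.
\begin{itemize}
\item State \(0\): \(P_R=\{a\}\), with both children in state \(0\).
\item State \(1\): \(P_R=\{a_2\}\), with both children in state \(0\); here \(d(a_2,a_4)=d(a_2,a_6)=2\).
\item State \(2\): \(P_R=\{a_4\}\), with the \(a_4\)-child in state \(2\) and the \(a_6\)-child in state \(0\).
\item State \(3\): \(P_R=\varnothing\), with both children in state \(0\).
\end{itemize}
In each case the children have value \(\beta\), since \(\rho_0=\rho_1=\rho_2=\beta\).

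\textbf{Main obstacle.} The only delicate point is the state-\(2\) packing witness. The \(a_4\)-child must have \(s\ge2\) to be compatible with the selected port, so I must use \(\rho_2=\beta\) rather than the deficient \(\rho_3=\beta-1\). I also need to confirm that the output state is exactly \(\min\{3,2,\,2+1+2,\,2+1+0\}=2\).
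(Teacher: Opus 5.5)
Your proposal is correct and takes the same approach as the paper. The lower bounds use the same two arguments: the four internal vertices \(a_2,a_3,a_5,a_7\) for domination, and diameter \(2\) plus distance at most \(2\) from \(a\) for packing. Your packing witnesses coincide with Table~\ref{tab:R-recursive-attainment}, and your domination witnesses \(\{a,a_3\}\), \(\{a_2,a_5\}\), \(\{a_3,a_4\}\) differ from the paper's but are valid.

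One harmless slip: for \(D_R=\{a_2,a_5\}\) the children may not be taken ``in any state''. Neither port lies in \(D_R\), so a child in state \(2\) would leave its root undominated; the same restriction applies to the \(a_6\)-child for \(D_R=\{a,a_3\}\). Since states \(0\) and \(1\) both cost \(\alpha\), the count \(2\alpha+2\) is unaffected.
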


\begin{table}[t]
\centering
\caption{Attainment data for the composition through \(R\).}
\label{tab:R-recursive-attainment}
{\renewcommand{\arraystretch}{1.25}
\setlength{\tabcolsep}{3pt}
\begin{tabularx}{\linewidth}{
  |>{\centering\arraybackslash}p{1.15cm}||
   >{\centering\arraybackslash}p{2.75cm}|
   >{\centering\arraybackslash}p{2.00cm}|
   >{\centering\arraybackslash}X||
   >{\centering\arraybackslash}p{2.75cm}|
   >{\centering\arraybackslash}p{2.00cm}|
   >{\centering\arraybackslash}X|
}
\hline
\textbf{State}&
\(\boldsymbol{\gamma}\) \textbf{ child states}&
\(\boldsymbol{D_R}\)&\(\boldsymbol{\gamma_i}\)&
\(\boldsymbol{\rho}\) \textbf{ child states}&
\(\boldsymbol{P_R}\)&\(\boldsymbol{\rho_i}\)\\
\hline\hline
0&\((0,0)\)&\(\{a,a_2\}\)&\(2\alpha+2\)&
  \((0,0)\)&\(\{a\}\)&\(2\beta+1\)\\ \hline
1&\((0,0)\)&\(\{a_2,a_4\}\)&\(2\alpha+2\)&
  \((0,0)\)&\(\{a_2\}\)&\(2\beta+1\)\\ \hline
2&\((0,0)\)&\(\{a_4,a_7\}\)&\(2\alpha+2\)&
  \((2,0)\)&\(\{a_4\}\)&\(2\beta+1\)\\ \hline
3&&&&\((0,0)\)&\(\varnothing\)&\(2\beta\)\\ \hline
\end{tabularx}}
\end{table}

\begin{figure}[b]
\centering
\resizebox{.9\textwidth}{!}{%
\begin{tikzpicture}[
  cycle edge/.append style={line width=.44pt},
  internal edge/.append style={line width=.30pt},
  bridge edge/.append style={line width=.72pt},
  ordinary vertex/.append style={line width=.30pt},
  attachment vertex/.append style={line width=.38pt,double distance=.40pt},
  center vertex/.append style={line width=.38pt,double distance=.40pt},
  root vertex/.append style={line width=.46pt,double distance=.48pt},
  child port/.append style={line width=.46pt,double distance=.48pt},
  wone connector cycle/.style={
    draw=AuditNavy,line width=.40pt
  },
  wone connector chord/.style={
    draw=AuditGray,line width=.28pt
  },
  wone connector hollow/.style={
    circle,draw=AuditBlue,fill=white,
    minimum size=2.4pt,inner sep=0pt,line width=.27pt
  },
  wone closing edge/.style={
    draw=AuditRed,line width=.68pt
  },
  wone attachment edge/.style={
    draw=AuditBrown,line width=.44pt
  },
  wone family box/.style={
    draw=AuditGold,dashed,rounded corners=7pt,line width=.32pt
  },
  wone label/.style={
    font=\fontsize{4.25}{5.0}\selectfont
  },
  pics/woneBzero/.style={
    code={
      \coordinate (-root) at (0,0);
      \coordinate (-Scenter) at (-.294,0);
      \foreach \i/\a in {1/0,2/40,3/80,5/160,6/200,8/280,9/320}
        \coordinate (-b\i) at ($(-Scenter)+(\a:.294)$);
      %
      \pic[scale=.28,transform shape]
        (upper) at (-.441,.65221) {blockBJoin};
      \pic[scale=.28,rotate=180,transform shape]
        (lower) at (-.441,-.65221) {blockBJoin};
      \coordinate (-b4) at (-.441,.25461);
      \coordinate (-b7) at (-.441,-.25461);
      \draw[wone connector cycle]
        (-b1)--(-b2)--(-b3)--(-b4)--(-b5)--
        (-b6)--(-b7)--(-b8)--(-b9)--cycle;
      \draw[wone connector chord]
        (-b2)--(-b6) (-b3)--(-b8) (-b5)--(-b9);
      \draw[wone attachment edge]
        (-b4)--(upper-root) (-b7)--(lower-root);
      \foreach \i in {2,3,5,6,8,9}
        \node[wone connector hollow] at (-b\i) {};
      \foreach \v in {b4,b7}
        \node[child port,scale=.28,transform shape] at (-\v) {};
      \foreach \v in {upper-root,lower-root}
        \node[root vertex,scale=.28,transform shape] at (\v) {};
      \node[root vertex,scale=.57,transform shape] at (-root) {};
    }
  },
  pics/woneR/.style={
    code={
      \coordinate (-a1) at (0,1.15);
      \coordinate (-a2) at (-1.00,.35);
      \coordinate (-a3) at (-.63,-.81);
      \coordinate (-a4) at ( .63,-.81);
      \coordinate (-a5) at (1.00,.35);
      \coordinate (-a6) at (0,.05);
      \coordinate (-a7) at ($(-a3)!.50!(-a4)$);
      \draw[cycle edge,line width=.40pt]
        (-a1)--(-a2)--(-a3)--(-a7)--(-a4)--(-a5)--cycle;
      \draw[internal edge,line width=.28pt] (-a3)--(-a6)--(-a5);
      \draw[internal edge,line width=.28pt] (-a2)--(-a7);
      \foreach \i in {2,3,5,7}
        \node[ordinary vertex] at (-a\i) {};
      \node[root vertex] at (-a1) {};
      \foreach \i in {4,6}
        \node[child port] at (-a\i) {};
      \coordinate (-root) at (-a1);
      \coordinate (-portone) at (-a4);
      \coordinate (-porttwo) at (-a6);
    }
  }
]
  \begin{scope}[on background layer]
    \draw[wone family box]
      (-4.42,-2.68) rectangle (-.28,2.68);
    \draw[wone family box]
      (.28,-2.68) rectangle (4.42,2.68);
  \end{scope}

  \draw[
    draw=AuditBlue!40,dashed,rounded corners=5pt,line width=.30pt
  ] (-4.25,.25) rectangle (-1.95,2.31);
  \draw[
    draw=AuditBlue!40,dashed,rounded corners=5pt,line width=.30pt
  ] (-4.25,-2.31) rectangle (-1.95,-.25);
  \draw[
    draw=AuditBlue!40,dashed,rounded corners=5pt,line width=.30pt
  ] (1.95,.25) rectangle (4.25,2.31);
  \draw[
    draw=AuditBlue!40,dashed,rounded corners=5pt,line width=.30pt
  ] (1.95,-2.31) rectangle (4.25,-.25);

  \pic (BLU) at (-2.65,1.28) {woneBzero};
  \pic (BLD) at (-2.65,-1.28) {woneBzero};
  \pic[rotate=180,transform shape] (BRU) at (2.65,1.28) {woneBzero};
  \pic[rotate=180,transform shape] (BRD) at (2.65,-1.28) {woneBzero};

  \pic[scale=.64,rotate=-90,transform shape] (RL) at (-1.20,0) {woneR};
  \pic[scale=.64,rotate=90,transform shape] (RR) at (1.20,0) {woneR};

  \begin{scope}[on background layer]
    \draw[wone attachment edge]
      (RL-portone) .. controls (-1.95,-.72) and (-2.05,-1.28) .. (BLD-root);
    \draw[wone attachment edge]
      (RL-porttwo) .. controls (-.82,.38) and (-1.72,1.28) .. (BLU-root);
    \draw[wone attachment edge]
      (RR-portone) .. controls (1.95,.72) and (2.05,1.28) .. (BRU-root);
    \draw[wone attachment edge]
      (RR-porttwo) .. controls (.82,-.38) and (1.72,-1.28) .. (BRD-root);
  \end{scope}
  \draw[wone closing edge] (RL-root)--(RR-root);

  \foreach \v in {RL-portone,RL-porttwo,RR-portone,RR-porttwo}
    \node[child port,scale=.64,transform shape] at (\v) {};
  \foreach \v in {BLU-root,BLD-root,BRU-root,BRD-root}
    \node[root vertex,scale=.57,transform shape] at (\v) {};
  \foreach \v in {RL-root,RR-root}
    \node[root vertex,scale=.64,transform shape] at (\v) {};

  \node[wone label] at (-1.08,.80) {\(R\)};
  \node[wone label] at ( 1.08,.80) {\(R\)};
  \node[wone label,anchor=east] at ($(BLU-Scenter)+(-.33,0)$) {\(S\)};
  \node[wone label,anchor=east] at ($(BLD-Scenter)+(-.33,0)$) {\(S\)};
  \node[wone label,anchor=west] at ($(BRU-Scenter)+(.33,0)$) {\(S\)};
  \node[wone label,anchor=west] at ($(BRD-Scenter)+(.33,0)$) {\(S\)};
  \node[wone label] at (-3.12,2.47) {\(B_0^\bullet\)};
  \node[wone label] at (-3.12,-2.47) {\(B_0^\bullet\)};
  \node[wone label] at ( 3.12,2.47) {\(B_0^\bullet\)};
  \node[wone label] at ( 3.12,-2.47) {\(B_0^\bullet\)};
  \node[font=\fontsize{5.2}{6.0}\selectfont] at (-2.40,2.86)
    {\(B_1^\bullet\)};
  \node[font=\fontsize{5.2}{6.0}\selectfont] at ( 2.40,2.86)
    {\(B_1^\bullet\)};
\end{tikzpicture}%
}
\caption{The graph \(\widehat B_1^\bullet\). Each blue dashed box
isolates a rooted copy of \(B_0^\bullet\), and each gold dashed box
isolates a rooted copy of \(B_1^\bullet\).}
\label{fig:bullet-one-recursive}
\end{figure}

\begin{proof}
Apply Lemma~\ref{lem:edge-attached-connector} with connector \(R\).
For a boundary domination candidate \(D\), put
\(D_R=D\cap V(R)\). Each child restriction has size at least
\(\alpha\). The internal vertices \(a_2,a_3,a_5,a_7\) cannot be
dominated from a child root, and no single vertex of \(R\) dominates
all four. Hence \(|D|\geq2\alpha+2\).

For a packing \(P\), put \(P_R=P\cap V(R)\). Each child restriction
has size at most \(\beta\). Since \(R\) has diameter \(2\),
\(|P_R|\leq1\), and hence \(|P|\leq2\beta+1\). If the output state is
\(3\), then \(P_R=\varnothing\), since every vertex of \(R\) is within
distance \(2\) of \(a\); in this case \(|P|\leq2\beta\).
The witnesses in Table~\ref{tab:R-recursive-attainment} attain the
corresponding bounds in every output state.
\end{proof}

\subsection{\texorpdfstring{The completed family \(\widehat B_t^\bullet\)}{The completed family B-hat-bullet}}

We now specialize Definition~\ref{def:connector-recursion} to the
rooted seed \((B,b)\). Here \(n=9\), and Lemma~\ref{lem:profile-B}
shows that Lemma~\ref{lem:connector-base-profile} applies with
\(\alpha=3\) and \(\beta=1\). Hence
\begin{equation}
\label{eq:B-bullet-base-data}
 |V(B_0^\bullet)|=47,\qquad
 \gamma_\bullet(B_0^\bullet)=(15,15,15),\qquad
 \rho_\bullet(B_0^\bullet)=(7,7,7,6).
\end{equation}

As in the subcubic construction, write \(\widehat B_t^\bullet\) for the
closure of \(B_t^\bullet\).
The graph \(\widehat B_1^\bullet\) is shown in
Figure~\ref{fig:bullet-one-recursive}.

\begin{lemma}\label{lem:bullet-structure}
For every \(t\geq0\), the graph \(B_t^\bullet\) is connected, its root
has degree \(2\), and every other vertex has degree \(3\). Consequently,
\(\widehat B_t^\bullet\) is connected and cubic.
\end{lemma}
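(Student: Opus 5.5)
The plan is induction on \(t\), following the template of Lemma~\ref{lem:branching-structure} but with exact degree bookkeeping.

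\textbf{Base case \(t=0\).} We first record the degree structure of the building blocks.

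In \((B,b)\), the root \(b\) has degree \(2\) and every other vertex has degree \(3\).

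In \(B^{\Join}=B^\uparrow\odot B^\uparrow\), each pendant-root extension adds a vertex \(p\) adjacent to the old root \(b\). This raises \(b\) to degree \(3\), and the identification of the two added roots gives the common root \(r\) of degree \(2\). Every other vertex keeps degree \(3\). So \(B^{\Join}\) is connected, its root has degree \(2\), and all other vertices have degree \(3\).

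In \(S=B-b_4b_7\), deleting the edge lowers \(b_4\) and \(b_7\) to degree \(2\). The root \(b\) keeps degree \(2\), and the remaining six vertices keep degree \(3\). Moreover \(S\) is connected, since the spanning \(9\)-cycle minus one edge is still a spanning path.

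Forming \(B_0^\bullet=B^{\Join}\ostar_S B^{\Join}\) adds the edges \(b_4x_1\) and \(b_7x_2\), where \(x_1,x_2\) are the roots of the two copies of \(B^{\Join}\). Each of these edges raises a degree-\(2\) port and a degree-\(2\) child root to degree \(3\). Hence in \(B_0^\bullet\) only the root \(b\) of \(S\) has degree \(2\), and the graph is connected because \(S\) and both children are connected.

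\textbf{Inductive step.} Assume \(B_t^\bullet\) is connected, its root has degree \(2\), and every other vertex has degree \(3\). We check the degrees in \(R\) directly from its definition.

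Subdividing \(a_3a_4\) in \(A\) by \(a_7\) and adding \(a_2a_7\) gives the following degrees:
\begin{itemize}
\item \(a_1\) has degree \(2\);
\item \(a_2\) has degree \(3\), via \(a_1,a_3,a_7\);
\item \(a_3\) has degree \(3\), via \(a_2,a_7,a_6\);
\item \(a_4\) has degree \(2\), via \(a_7,a_5\);
\item \(a_5\) has degree \(3\), via \(a_4,a_1,a_6\);
\item \(a_6\) has degree \(2\), via \(a_3,a_5\);
\item \(a_7\) has degree \(3\), via \(a_3,a_4,a_2\).
\end{itemize}
This confirms the claim preceding the figure: the root \(a\) and the ports \(a_4,a_6\) have degree \(2\), and all other vertices have degree \(3\). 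Also \(R\) is connected.

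In \(B_{t+1}^\bullet=B_t^\bullet\ostar_R B_t^\bullet\), the two attachment edges raise \(a_4\), \(a_6\) and the two child roots from degree \(2\) to degree \(3\). No other degrees change. So only \(a\) has degree \(2\), and connectivity is inherited from \(R\) and the two children. This completes the induction.

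\textbf{Closure.} In \(\widehat B_t^\bullet\), the closing edge joins the two roots, each of degree \(2\), raising both to degree \(3\). Every vertex then has degree \(3\), and the graph is connected because both copies are connected and are joined by an edge. So \(\widehat B_t^\bullet\) is connected and cubic.

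\textbf{Main obstacle.} There is no real obstacle; the only care needed is the explicit degree audit of \(S\) and \(R\) and of the root of \(B^{\Join}\), carried out above.
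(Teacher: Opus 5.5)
Your proof is correct and follows the same induction as the paper, with the degree checks for \(B^{\Join}\), \(S\), and \(R\) written out explicitly rather than read off the definitions. One small slip: \(b_4b_7\) is one of the four added chords, not an edge of the defining \(9\)-cycle, so \(S\) still contains the whole spanning cycle rather than merely a spanning path; the connectivity conclusion is unaffected.
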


\begin{proof}
The definition of \(B_0^\bullet\) shows that it is connected, its root
has degree \(2\), and every other vertex has degree \(3\). Suppose the
assertion holds for \(B_t^\bullet\). Then
\(B_{t+1}^\bullet=B_t^\bullet\ostar_R B_t^\bullet\) is connected, and
its two attachment edges raise the child roots and the ports
\(a_4,a_6\) from degree \(2\) to \(3\). The new root \(a\) has degree
\(2\), proving the assertion by induction.

The closing edge raises both roots to degree \(3\) and leaves all other
degrees unchanged. Hence \(\widehat B_t^\bullet\) is connected and
cubic.
\end{proof}

\begin{theorem}\label{thm:bullet-parameters}
For every \(t\geq0\), the graph \(\widehat B_t^\bullet\) is connected
and cubic, and satisfies
\[
 |V(\widehat B_t^\bullet)|=108\cdot2^t-14,
 \qquad
 \gamma(\widehat B_t^\bullet)=34\cdot2^t-4,
 \qquad
 \rho(\widehat B_t^\bullet)=16\cdot2^t-2.
\]
Consequently,
\[
 \gamma(\widehat B_t^\bullet)
 -2\rho(\widehat B_t^\bullet)=2^{t+1},
\]
and
\[
 \frac{\gamma(\widehat B_t^\bullet)}{\rho(\widehat B_t^\bullet)}
 =\frac{17}{8}
 +\frac{1}{64\cdot2^t-8}
 \,\xrightarrow[t\to\infty]{}\,\frac{17}{8}.
\]
\end{theorem}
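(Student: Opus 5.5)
The plan mirrors the proof of Theorem~\ref{thm:star-parameters}. The structural assertions (connected and cubic) come directly from Lemma~\ref{lem:bullet-structure}. For the parameters, I will first work with an arbitrary rooted seed \((H,x)\) of order \(n\) satisfying the hypotheses of Lemma~\ref{lem:connector-base-profile}, with parameters \(\alpha,\beta\).

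By Lemma~\ref{lem:connector-base-profile}, \(H_0^\bullet\) already lies in the balanced class
\[
\gamma_\bullet=(d,d,d),\qquad \rho_\bullet=(p,p,p,p-1).
\]
Lemma~\ref{lem:connector-recursive-transfer} shows that composition through \(R\) preserves this class. So by induction on \(t\) there are sequences \(d_t,p_t\) with \(\gamma_\bullet(H_t^\bullet)=(d_t,d_t,d_t)\) and \(\rho_\bullet(H_t^\bullet)=(p_t,p_t,p_t,p_t-1)\). Writing \(v_t=|V(H_t^\bullet)|\) and using \(|V(R)|=7\), these satisfy
\[
d_0=4\alpha+3,\quad p_0=4\beta+3,\quad v_0=4n+11,\qquad d_{t+1}=2d_t+2,\quad p_{t+1}=2p_t+1,\quad v_{t+1}=2v_t+7.
\]
Solving the affine recurrences (shift by the fixed points \(-2,-1,-7\)) gives
\[
d_t=(4\alpha+5)2^t-2,\qquad p_t=(4\beta+4)2^t-1,\qquad v_t=(4n+18)2^t-7.
\]

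Next, Lemma~\ref{lem:profile-closure} applies to \(H_t^\bullet\) because its profile is exactly in the required form. It yields \(\gamma(\widehat H_t^\bullet)=2d_t\) and \(\rho(\widehat H_t^\bullet)=2p_t\), and the order is \(2v_t\).

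Now specialize to \(H=B\), where \(n=9\), \(\alpha=3\), \(\beta=1\) by Lemma~\ref{lem:profile-B}. This gives \(v_t=54\cdot2^t-7\), \(d_t=17\cdot2^t-2\), \(p_t=8\cdot2^t-1\). Doubling produces the stated formulas \(108\cdot2^t-14\), \(34\cdot2^t-4\), and \(16\cdot2^t-2\). As a consistency check, \(t=0\) recovers \eqref{eq:B-bullet-base-data}.

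The gap is \(34\cdot2^t-4-32\cdot2^t+4=2^{t+1}\). For the ratio, write
\[
\frac{34\cdot2^t-4}{16\cdot2^t-2}=\frac{17}{8}+\frac{34\cdot2^t-4-\tfrac{17}{8}(16\cdot2^t-2)}{16\cdot2^t-2}.
\]
The numerator of the correction is \(-4+\tfrac{17}{4}=\tfrac14\), so the correction term is \(\tfrac{1}{64\cdot2^t-8}\), which tends to \(0\).

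There is no genuine obstacle, since all the profile work sits in the earlier lemmas. The only points needing care are bookkeeping: confirming that \(R\) has \(7\) vertices, so the additive constant in the order recurrence is \(7\) rather than the \(6\) used for \(A\), and that the hypotheses of Lemma~\ref{lem:profile-closure} hold at every \(t\), including \(t=0\).
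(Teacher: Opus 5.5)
Your proposal is correct and follows essentially the same route as the paper. It takes the structure from Lemma~\ref{lem:bullet-structure}, the base profile from Lemma~\ref{lem:connector-base-profile}, the preservation of the class \((d,d,d)\), \((p,p,p,p-1)\) under \(R\) from Lemma~\ref{lem:connector-recursive-transfer} (with the recurrences \(d_{t+1}=2d_t+2\), \(p_{t+1}=2p_t+1\), \(v_{t+1}=2v_t+7\)), and the closure from Lemma~\ref{lem:profile-closure}; the only difference is cosmetic, since you solve the recurrences for a general seed \((n,\alpha,\beta)\) before specializing to \(B\), whereas the paper substitutes the base data \eqref{eq:B-bullet-base-data} directly.
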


\begin{proof}
The structural assertions follow from
Lemma~\ref{lem:bullet-structure}.
Equation~\eqref{eq:B-bullet-base-data} and
Lemma~\ref{lem:connector-recursive-transfer} show that there are
sequences \((d_t)\) and \((p_t)\) such that
\[
 \gamma_\bullet(B_t^\bullet)=(d_t,d_t,d_t),\qquad
 \rho_\bullet(B_t^\bullet)
 =(p_t,p_t,p_t,p_t-1).
\]
Set \(v_t=|V(B_t^\bullet)|\). Then
\[
\begin{alignedat}{3}
 d_0&=15,\qquad&
 p_0&=7,\qquad&
 v_0&=47,\\
 d_{t+1}&=2d_t+2,\qquad&
 p_{t+1}&=2p_t+1,\qquad&
 v_{t+1}&=2v_t+7.
\end{alignedat}
\]
Hence
\[
\begin{alignedat}{3}
 d_t&=17\cdot2^t-2,\qquad&
 p_t&=8\cdot2^t-1,\qquad&
 v_t&=54\cdot2^t-7.
\end{alignedat}
\]
The closing construction and Lemma~\ref{lem:profile-closure} give
\[
 |V(\widehat B_t^\bullet)|=2v_t,
 \qquad
 \gamma(\widehat B_t^\bullet)=2d_t,
 \qquad
 \rho(\widehat B_t^\bullet)=2p_t.
\]
Substitution gives the stated formulas; the gap identity and ratio
formula follow by direct simplification.
\end{proof}

\section{Concluding remarks}
\label{sec:consequences}
The two branching families give
\[
 c_{\mathrm{cub}}\geq\frac{17}{8},\qquad
 c_{\mathrm{sub}}\geq\frac{13}{6}.
\]
For a connected cubic graph \(G\) of order \(n=|V(G)|>8\), the
domination bound of Kostochka and Stocker
\cite{KostochkaStocker2009} and the packing bound of Goddard and
Henning \cite{GoddardHenning2024} give, respectively,
\[
 \gamma(G)\leq\frac5{14}n,\qquad
 \rho(G)>\frac{17}{132}(n-3).
\]
Combining the two inequalities gives
\[
 n<\frac{132}{17}\rho(G)+3
 \quad\text{and hence}\quad
 \gamma(G)<\frac{330}{119}\rho(G)+\frac{15}{14}.
\]
The finitely many connected cubic graphs of order at most \(8\) do not
affect the limsup. Therefore
\[
 \frac{17}{8}\leq c_{\mathrm{cub}}\leq\frac{330}{119}.
\]

The exact family formulas establish the asymptotic lower bounds above
and, as a consequence, give strict counterexamples to
Conjecture~\ref{conj:HLR}.
Motivated by these bounds, we propose three revised statements. 
The leading coefficients in parts~(2) and~(3) are forced by our
branching families, while the additive constants make the three
original equality graphs \(H_1,H_2,H_3\) sharp examples.

\begin{conjecture}
Let \(G\) be a connected subcubic graph.
\begin{enumerate}
\item If \(G\) is bridgeless, then
\(\gamma(G)\leq2\rho(G)+1\).
\item If \(G\) is cubic, then
\(8\gamma(G)\leq17\rho(G)+7\).
\item In general, \(6\gamma(G)\leq13\rho(G)+5\).
\end{enumerate}
\end{conjecture}

\section*{Acknowledgments}

This project originated when the second author, an undergraduate,
used OpenAI Codex, primarily with the GPT-5.6 Sol model, to identify
an open conjecture and explore possible solutions. This preliminary
AI-assisted exploration led to Conjecture~\ref{conj:HLR} and suggested
a candidate counterexample based on path-like assemblies of copies of
the graph now denoted by \(B\). After independently verifying this
candidate, the authors used further computational experiments to
develop the initial construction into the binary branching families
and stronger asymptotic ratio bounds proved here. OpenAI Codex and
other generative-AI tools also substantially assisted in candidate
exploration, computations, the development of verification code, and
manuscript preparation. The authors independently checked all
mathematical claims and take full responsibility for the contents of
the paper.


\end{document}